\title[Superstrong cardinals are never indestructible]{Superstrong and other large cardinals are never Laver indestructible}
\author[Bagaria]{Joan Bagaria}
 \address[J. Bagaria]{ICREA and Universitat de Barcelona, 
                      Departament de L\`ogica, Hist\`oria i Filosofia de la Ci\`encia, 
                      Montalegre 6-8, 08001 Barcelona, Catalonia}
 \email{joan.bagaria@icrea.cat}
 \urladdr{http://www.icrea.cat/Web/ScientificStaff/Joan-Bagaria-i-Pigrau-119}
\author[Hamkins]{Joel David Hamkins}
 \address[J. D. Hamkins]
        {Mathematics, Philosophy, Computer Science,
          The Graduate Center of The City University of New York,
          365 Fifth Avenue, New York, NY 10016
          \&
          Mathematics,
          College of Staten Island of CUNY,
          Staten Island, NY 10314}
\email{jhamkins@gc.cuny.edu}
\urladdr{http://jdh.hamkins.org}
\author[Tsaprounis]{Konstantinos Tsaprounis}
 \address[K. Tsaprounis]
         {Universitat de Barcelona, Departament de L\`ogica, Hist\`oria i Filosofia de la Ci\`encia, Montalegre 6-8, 08001 Barcelona, Catalonia}
 \email{kostas.tsap@gmail.com}
\author[Usuba]{Toshimichi Usuba}
 \address[T. Usuba]
         {Organization of Advanced Science and Technology, Kobe University,
         Rokko-dai 1-1, Nada, Kobe, 657-8501 Japan}
 \email{usuba@people.kobe-u.ac.jp}
\thanks{The research work of the first author was partially supported by the Spanish Ministry of Science and Innovation under grant MTM2011-25229 and by the Generalitat de Catalunya (Catalan Government) under grant 2009-SGR-187. The second author's research has been supported in part by Simons Foundation grant 209252, PSC-CUNY grant 64732-00-42, and CUNY Collaborative Incentive grant 80209-06 20, and he is grateful for the support provided to him as a visitor at the University of Barcelona in December, 2012, where much of this research took place in the days immediately following the dissertation defense of the third author, inspired by remarks made at the defense. The fourth author had subsequently proved essentially similar results independently, and so we joined forces, merged the papers and strengthened the results into the present account. Commentary concerning this article can be made at \href{http://jdh.hamkins.org/superstrong-never-indestructible}{jdh.hamkins.org/superstrong-never-indestructible}.}
\newtheorem{theorem}{Theorem}
\newtheorem{maintheorem}[theorem]{Main Theorem}
\newtheorem{corollary}[theorem]{Corollary}
\newtheorem{sublemma}{Lemma}[theorem]
\newtheorem{lemma}[theorem]{Lemma}
\newtheorem{question}[theorem]{Question}
\newcommand{\QED}{\end{proof}}
\def\proclaim[#1]{{\bf #1}}
\def\BF#1.{{\bf #1.}}
\newcommand{\Vopenka}{Vop\v{e}nka}
\renewcommand{\P}{{\mathbb P}}
\newcommand{\Q}{{\mathbb Q}}
\newcommand{\Qdot}{{\dot\Q}}
\newcommand{\of}{\subseteq}
\newcommand{\set}[1]{\{\,{#1}\,\}}
\newcommand{\elesub}{\prec}
\newcommand{\cof}{\mathop{\rm cof}}
\newcommand{\Add}{\mathop{\rm Add}}
\newcommand{\Coll}{\mathop{\rm Coll}}
\newcommand{\Con}{\mathop{{\rm Con}}}
\newcommand{\image}{\mathbin{\hbox{\tt\char'42}}}
\newcommand{\restrict}{\upharpoonright} 
\newcommand{\satisfies}{\models}
\newcommand{\intersect}{\cap}
\newcommand{\smalllt}{\mathrel{\mathchoice{\raise2pt\hbox{$\scriptstyle<$}}{\raise1pt\hbox{$\scriptstyle<$}}{\raise0pt\hbox{$\scriptscriptstyle<$}}{\scriptscriptstyle<}}}
\newcommand{\smallleq}{\mathrel{\mathchoice{\raise2pt\hbox{$\scriptstyle\leq$}}{\raise1pt\hbox{$\scriptstyle\leq$}}{\raise1pt\hbox{$\scriptscriptstyle\leq$}}{\scriptscriptstyle\leq}}}
\newcommand{\lt}{\smalllt}
\newcommand{\ltkappa}{{{\smalllt}\kappa}}
\newcommand{\leqkappa}{{{\smallleq}\kappa}}
\newcommand{\ltlambda}{{{\smalllt}\lambda}}
\newcommand{\ltgamma}{{{\smalllt}\gamma}}
\newcommand{\leqdelta}{{{\smallleq}\delta}}
\newcommand{\ltdelta}{{{\smalllt}\delta}}
\newcommand{\boolval}[1]{\mathopen{\lbrack\!\lbrack}\,#1\,\mathclose{\rbrack\!\rbrack}}
\def\[#1]{\boolval{#1}}
\newbox\gnBoxA
\newdimen\gnCornerHgt
\newdimen\gnArgHgt
\def\gcode #1{%
\setbox\gnBoxA=\hbox{$#1$}%
\gnArgHgt=\ht\gnBoxA%
\ifnum     \gnArgHgt<\gnCornerHgt \gnArgHgt=0pt%
\else \advance \gnArgHgt by -\gnCornerHgt%
\fi \raise\gnArgHgt\hbox{\tiny$\ulcorner$} \box\gnBoxA %
\raise\gnArgHgt\hbox{\tiny$\urcorner$}}
\newcommand{\UnderTilde}[1]{{\setbox1=\hbox{$#1$}\baselineskip=0pt\vtop{\hbox{$#1$}\hbox to\wd1{\hfil$\sim$\hfil}}}{}}
\newcommand{\Undertilde}[1]{{\setbox1=\hbox{$#1$}\baselineskip=0pt\vtop{\hbox{$#1$}\hbox to\wd1{\hfil$\scriptstyle\sim$\hfil}}}{}}
\newcommand{\undertilde}[1]{{\setbox1=\hbox{$#1$}\baselineskip=0pt\vtop{\hbox{$#1$}\hbox to\wd1{\hfil$\scriptscriptstyle\sim$\hfil}}}{}}
\newcommand{\UnderdTilde}[1]{{\setbox1=\hbox{$#1$}\baselineskip=0pt\vtop{\hbox{$#1$}\hbox to\wd1{\hfil$\approx$\hfil}}}{}}
\newcommand{\Underdtilde}[1]{{\setbox1=\hbox{$#1$}\baselineskip=0pt\vtop{\hbox{$#1$}\hbox to\wd1{\hfil\scriptsize$\approx$\hfil}}}{}}
\newcommand{\st}{\mid}
\renewcommand{\th}{{\hbox{\scriptsize th}}}
\newcommand{\iso}{\cong}
\def\<#1>{\langle#1\rangle}
\newcommand{\Ord}{\mathop{{\rm Ord}}}
\newcommand{\ZFC}{{\rm ZFC}}
\newcommand{\ZC}{{\rm ZC}}
\newcommand{\KM}{{\rm KM}}
\newcommand{\GBC}{{\rm GBC}}
\newcommand{\cell}[1]{\boxit{\hbox to 17pt{\strut\hfil$#1$\hfil}}}
\newcommand{\head}[2]{\lower2pt\vbox{\hbox{\strut\footnotesize\it\hskip3pt#2}\boxit{\cell#1}}}
\newcommand{\boxit}[1]{\setbox4=\hbox{\kern2pt#1\kern2pt}\hbox{\vrule\vbox{\hrule\kern2pt\box4\kern2pt\hrule}\vrule}}
\newcommand{\Col}[3]{\hbox{\vbox{\baselineskip=0pt\parskip=0pt\cell#1\cell#2\cell#3}}}
\newcommand{\tapenames}{\raise 5pt\vbox to .7in{\hbox to .8in{\it\hfill input: \strut}\vfill\hbox to
.8in{\it\hfill scratch: \strut}\vfill\hbox to .8in{\it\hfill output: \strut}}}
\newcommand{\Head}[4]{\lower2pt\vbox{\hbox to25pt{\strut\footnotesize\it\hfill#4\hfill}\boxit{\Col#1#2#3}}}
\newcommand{\Dots}{\raise 5pt\vbox to .7in{\hbox{\ $\cdots$\strut}\vfill\hbox{\ $\cdots$\strut}\vfill\hbox{\
$\cdots$\strut}}}
\newcommand{\df}{\it} 
\begin{document}

\begin{abstract}
Superstrong cardinals are never Laver indestructible. Similarly, almost huge cardinals, huge cardinals, superhuge cardinals, rank-into-rank cardinals, extendible cardinals, $1$-extendible cardinals, $0$-extendible cardinals, weakly superstrong cardinals, uplifting cardinals, pseudo-uplifting cardinals, superstrongly unfoldable cardinals, $\Sigma_n$-reflecting cardinals, $\Sigma_n$-correct cardinals and $\Sigma_n$-extendible cardinals (all for $n\geq 3$) are never Laver indestructible. In fact, all these large cardinal properties are superdestructible: if $\kappa$ exhibits any of them, with corresponding target $\theta$, then in any forcing extension arising from nontrivial strategically $\ltkappa$-closed forcing $\Q\in V_\theta$, the cardinal $\kappa$ will exhibit none of the large cardinal properties with target $\theta$ or larger.
\end{abstract}

\maketitle

\section{Introduction}

\noindent
The large cardinal indestructibility phenomenon, occurring when certain preparatory forcing makes a given large cardinal become necessarily preserved by any subsequent forcing from a large class of forcing notions, is pervasive in the large cardinal hierarchy. The phenomenon arose in Laver's seminal result~\cite{Laver78} that any supercompact cardinal $\kappa$ can be made indestructible by $\ltkappa$-directed closed forcing. It continued with the Gitik-Shelah~\cite{GitikShelah89} treatment of strong cardinals; the universal indestructibility of Apter and Hamkins~\cite{ApterHamkins99:UniversalIndestructibility}, which produced simultaneous indestructibility for all weakly compact, measurable, strongly compact, supercompact cardinals and others;  the lottery preparation of Hamkins~\cite{Hamkins2000:LotteryPreparation}, which applies generally to diverse large cardinals; work of Apter, Gitik and Sargsyan on indestructibility and the large-cardinal identity crises~\cite{ApterGitik98, Apter2006:IndestructibilityAndStrongCompactness, Apter2006:LeastStronglyCompactCanBeLeastStrongAndIndestructible, Sargsyan2009:OnIndestructibilityAspectsOfIdentityCrises}; the indestructibility of strongly unfoldable cardinals~\cite{Johnstone2007:Dissertation, Johnstone2008:StronglyUnfoldableCardinalsMadeIndestructible}; the indestructibility of \Vopenka's principle~\cite{Brooke-Taylor2011:IndestructibilityOfVopenkaPrinciple}; and diverse other treatments of large cardinal indestructibility. Based on these results, one might be tempted to the general conclusion that all the usual large cardinals can be made indestructible. (Meanwhile, results in~\cite{Hamkins94:FragileMeasurability,Hamkins94:Dissertation,Hamkins98:SmallForcing, HamkinsShelah98:Dual, Hamkins99:GapForcingGen, Hamkins2001:GapForcing, Hamkins2003:ExtensionsWithApproximationAndCoverProperties} show the dual result that large cardinal properties can in contrast be made destructible, and furthermore that small forcing quite generally ruins indestructibility.)

In this article, we temper that temptation by proving that certain kinds of large cardinals cannot be made nontrivially indestructible. Superstrong cardinals, we prove, are never Laver indestructible. Consequently, neither are almost huge cardinals, huge cardinals, superhuge cardinals, rank-into-rank cardinals, extendible cardinals and $1$-extendible cardinals, to name a few. Even the $0$-extendible cardinals are never indestructible, and neither are weakly superstrong cardinals, uplifting cardinals, pseudo-uplifting cardinals, strongly uplifting cardinals, superstrongly unfoldable cardinals, $\Sigma_n$-reflecting cardinals, $\Sigma_n$-correct cardinals and $\Sigma_n$-extendible cardinals, when $n\geq 3$. (A cardinal $\kappa$ is $\Sigma_n$-extendible---or more precisely, $(\Sigma_n,0)$-extendible, since it is a weakening of $0$-extendibility---if there is some $\theta>\kappa$ with $V_\kappa\elesub_{\Sigma_n}V_\theta$; see~\S\ref{Section.Background}.) In fact, all these large cardinal properties are superdestructible, in the sense that if $\kappa$ exhibits any of them, with corresponding target $\theta$, then in any forcing extension arising from nontrivial strategically $\ltkappa$-closed forcing $\Q\in V_\theta$, the cardinal $\kappa$ will exhibit none of the large cardinal properties with target $\theta$ or larger. Our strongest result in this line is expressed by main theorem~\ref{Theorem.Sigma2Sigma3Extendible}, asserting that if $\kappa$ is $\Sigma_2$-extendible to target $\theta$ or higher in $V$, then it is not $\Sigma_3$-extendible to target $\theta$ or higher in any nontrivial forcing extension by strategically $\ltkappa$-closed forcing $\Q\in V_\theta$. We can drop the assumption that $\kappa$ has any large cardinal property in $V$ by restricting the class of forcing somewhat, as in theorem~\ref{Theorem.Q=QxQhomogeneous}. Corollary~\ref{Corollary.Add(kappa,1)Etc.} shows as a consequence that many quite ordinary forcing notions, which one might otherwise have expected to fall under the scope of an indestructibility result, will definitely ruin all these large cardinal properties. For example, adding a Cohen subset to any cardinal $\kappa$ will definitely prevent it from being superstrong---as well as preventing it from being uplifting, $\Sigma_3$-correct, $\Sigma_3$-extendible and so on with all the large cardinal properties mentioned above---in the forcing extension.

\begin{maintheorem}\label{Theorem.MainTheorem}\
\begin{enumerate}
 \item Superstrong cardinals are never Laver indestructible.
 \item Consequently, almost huge, huge, superhuge and rank-into-rank cardinals are never Laver indestructible.
 \item Similarly, extendible cardinals, $1$-extendible and even $0$-extendible cardinals are never Laver indestructible.
 \item Uplifting cardinals, pseudo-uplifting cardinals, weakly superstrong cardinals, superstrongly unfoldable cardinals and strongly uplifting cardinals are never Laver indestructible.
 \item $\Sigma_n$-reflecting and indeed $\Sigma_n$-correct cardinals, for each finite $n\geq 3$, are never Laver indestructible.
 \item Indeed---the strongest result here, because it is the weakest notion---$\Sigma_3$-extendible cardinals are never Laver indestructible.
\end{enumerate}
In fact, each of these large cardinal properties is superdestructible. Namely, if $\kappa$ exhibits any of them, with corresponding target $\theta$, then in any forcing extension arising from nontrivial strategically $\ltkappa$-closed forcing $\Q\in V_\theta$, the cardinal $\kappa$ will exhibit none of the mentioned large cardinal properties with target $\theta$ or larger.
\end{maintheorem}

Precise definitions of the large cardinal properties appear at the end of section~\ref{Section.Background}. Statement (5) of main theorem~\ref{Theorem.MainTheorem} is technically a theorem scheme, a separate statement for each finite $n$ in the meta-theory.

We shall prove main theorem~\ref{Theorem.MainTheorem} as a corollary to the following strengthened version of statement (6), which we state here separately since it is the focal case and may be understood without any special terminology.

\begin{maintheorem}\label{Theorem.Sigma2Sigma3Extendible}
Suppose that $V_\kappa\elesub_{\Sigma_2}V_\lambda$ for some $\lambda\geq\eta$ and that $G\of\Q$ is $V$-generic for nontrivial strategically $\ltkappa$-closed forcing $\Q\in V_\eta$. Then for all $\theta\geq\eta$,
$$V_\kappa=V[G]_\kappa\not\elesub_{\Sigma_3}V[G]_\theta.$$
\end{maintheorem}

In other words, if $\kappa$ is $\Sigma_2$-extendible with target above $\eta$ in $V$, then after any nontrivial strategically $\ltkappa$-closed forcing in $V_\eta$, it is not $\Sigma_3$-extendible with target above $\eta$.

Before continuing, we should like to express how very pleased and honored we are to be a part of this special volume in the memory of Richard Laver, whose mathematics has inspired and informed so much of our own work and, indeed, of the entire field. We are especially pleased to participate with this particular article, as it happens to make its progress specifically by building connections between two of Laver's most fundamental contributions, namely, the large-cardinal indestructibility phenomenon he pioneered in \cite{Laver78} and the ground-model definability phenomenon he discovered in \cite{Laver2007:CertainVeryLargeCardinalsNotCreated}. Our main theorem explains how ground-model definability places limits on the extent of indestructibility.

\section{Some geological background and definitions}\label{Section.Background}

Our proof will make use of recent results on the definability of ground models in the emerging topic known as set-theoretic geology, the study of the collection of ground models over which the universe $V$ was obtained by forcing. Theorem~\ref{Theorem.Grounds} summarizes the basic situation, and is closely related to Laver's theorem~\cite{Laver2007:CertainVeryLargeCardinalsNotCreated}, proved independently by Woodin, on the definability of grounds, namely, the fact that if $V\of V[G]$ is a forcing extension via $V$-generic filter $G\of\Q\in V$, then $V$ is a definable class in $V[G]$; it is also closely related to Hamkins's strengthening of that theorem to the pseudo-grounds, namely, if $W\of V$ has the $\delta$-approximation and cover properties (defined below) and $(\delta^+)^W=\delta^+$, then $W$ is definable in $V$ using parameter $r=({}^\ltdelta 2)^W$. A transitive class model $W$ of \ZFC\ is a {\df ground} of $V$ if $V=W[G]$ is a forcing extension via some $W$-generic $G\of\Q\in W$.

\begin{theorem}[\cite{FuchsHamkinsReitz:Set-theoreticGeology}]\label{Theorem.Grounds}
There is a parameterized family $\set{W_r\mid r\in V}$ of
transitive classes such that:
 \begin{enumerate}
  \item Every $W_r$ is a ground of $V$.
  \item Every ground of $V$ is $W_r$ for some $r$.
  \item The family is uniformly definable in that the relation ``$x\in W_r$'' is definable without parameters.
 \end{enumerate}
\end{theorem}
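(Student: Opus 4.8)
The plan is to base everything on the $\delta$-approximation and $\delta$-cover properties and on the rigidity they impose. Recall that a transitive class $W\of V$ with $\ORD\of W$ and $W\satisfies\ZFC$ has the \emph{$\delta$-cover property} if every $A\in V$ with $A\of W$ and $\Card{A}<\delta$ is contained in some $B\in W$ with $(\Card{B}<\delta)^W$, and the \emph{$\delta$-approximation property} if every $A\of W$ with $A\in V$ and $A\intersect a\in W$ for all $a\in W$ with $(\Card{a}<\delta)^W$ is itself a member of $W$. The first ingredient is that grounds always have these properties: if $V=W[G]$ for $G\of\Q\in W$ and we set $\delta=(\Card{\Q}^\plus)^W$, then $\Q$ is $\delta$-c.c.\ in $W$, so cardinals and cofinalities $\geq\delta$ are preserved, $(\delta^\plus)^W=\delta^\plus$, and the pair $W\of V$ has the $\delta$-approximation and $\delta$-cover properties. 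Thus every ground arrives equipped with a regular threshold $\delta$ below which no forcing of size less than $\delta$ could have disturbed it.

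Next I would invoke the uniqueness theorem of Hamkins: if $W_0,W_1\of V$ are transitive models of \ZFC\ containing all ordinals, each having the $\delta$-approximation and cover properties, with $(\delta^\plus)^{W_0}=(\delta^\plus)^{W_1}=\delta^\plus$ and with $({}^{\ltdelta}2)^{W_0}=({}^{\ltdelta}2)^{W_1}$, then $W_0=W_1$. The mechanism is that any set of ordinals on which two such models disagreed would, via the cover property, be covered by small $W_i$-sets and then, via the approximation property, be forced into both models or into neither, a contradiction. Hence $\delta$ together with the single parameter $r=({}^{\ltdelta}2)^W$ pins the ground $W$ down uniquely, and by the first ingredient every ground is so pinned down for a suitable choice of $\delta$.

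The real content is to convert this uniqueness into a genuinely \emph{first-order} definition, since ``the unique inner model such that $\dots$'' quantifies over classes. Here I would proceed locally. Encode in a single set $r$ both the threshold $\delta$ and the set $({}^{\ltdelta}2)^W$. For each ordinal $\theta$ one defines $W_r\intersect V_\theta$ inside $V$ from $r$ and $\theta$ by gathering the transitive set models $M\satisfies\ZFCm$ of large enough rank that stand in the $\delta$-approximation and cover relation to the corresponding initial segment of $V$, that compute $\delta^\plus$ correctly, and for which $({}^{\ltdelta}2)^M$ agrees with the relevant part of $r$, and then reading off membership through approximations. The approximation property is exactly what makes membership locally decidable: a set of ordinals $A$ lies in $W$ iff each small approximation $A\intersect a$, for $a\in W$ of $W$-size $<\delta$, already lies in $W$, and by the cover property it suffices to test against the small $W$-sets recoverable from $r$. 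Taking $W_r=\Union_\theta(W_r\intersect V_\theta)$ then produces a class defined by one formula in the parameter $r$, which is clause~(3). For clauses~(1) and~(2) I would let $r$ range over all sets: when $r$ genuinely codes $(\delta,({}^{\ltdelta}2)^W)$ for an actual ground the construction returns $W_r=W$, so every ground occurs and each such $W_r$ is a ground; for the remaining values of $r$ one adopts the convention $W_r=V$, which is the ground obtained by trivial forcing, so that every $W_r$ is a ground without exception. Whether $r$ is of the good form is itself a first-order property of $r$, so the case split stays inside a single formula.

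The main obstacle is precisely clause~(3): passing from uniqueness to uniform first-order definability. Uniqueness only says that at most one ground fits $r$; it does not by itself supply a set-quantifier formula capturing that ground. The work lies in verifying that the localized definition of $W_r\intersect V_\theta$ is both correct and uniform in $\theta$---that membership computed from the approximating set models inside $V$ coincides exactly with membership in the true ground, admitting no spurious sets and omitting none. This rests squarely on the $\delta$-approximation property together with the fact that the lone parameter $({}^{\ltdelta}2)^W$ suffices to reconstruct all the small approximating data, which is the heart of the Fuchs--Hamkins--Reitz argument.
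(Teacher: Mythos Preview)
Your approach is the same as the paper's: derive the theorem from Hamkins's uniqueness theorem for inner models with the $\delta$-approximation and cover properties, assemble $W_r$ as a union of local pieces inside rank initial segments, and default to $W_r=V$ for unsuccessful $r$.

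One point needs tightening. Your local definition of $W_r\cap V_\theta$ checks only approximation, cover, the correct $\delta^\plus$, and $({}^{\ltdelta}2)^M=r$; this pins down the \emph{pseudo-grounds}, not the grounds, and a parameter $r$ may well code a pseudo-ground that is not a ground of $V$. For such $r$ your construction would return that pseudo-ground rather than $V$, so clause~(1) would fail. The paper's definition of ``$r$ succeeds in defining a ground'' builds in the extra requirement at each stage: there must exist $\Q\in M$ and an $M$-generic $G\of\Q$ with $M[G]=V_\gamma$. You should either fold this forcing-extension check into your local definition or make explicit that your ``$r$ is of good form'' test includes it, not merely the approximation/cover data. (The paper also works specifically at $\beth$-fixed points $\gamma$ of cofinality larger than $\delta$ and uses Reitz's theory $\ZFC_\delta$ in place of $\ZFCm$, which streamlines the verification that the local models cohere.)
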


An immediate consequence of theorem~\ref{Theorem.Grounds} is that many second-order-seeming assertions about how the set-theoretic universe $V$ might have been obtained by forcing are actually first-order expressible in the language of set theory. For example, the assertion that $V$ is not a set-forcing extension of any inner model, an assertion known as the {\df ground axiom}, introduced by Hamkins and Reitz~\cite{Reitz2006:Dissertation,Reitz2007:TheGroundAxiom,Hamkins2005:TheGroundAxiom}, is expressed simply as ``$\forall r\, V=W_r$.'' Similarly, it follows from theorem~\ref{Theorem.Grounds} that the assertion that the universe is obtained by forcing of this or that special kind over some ground model is first-order expressible in the language of set theory.

Our argument will rely not only on the statement of theorem~\ref{Theorem.Grounds}, but also on some of the ideas and finer details of the proof, for we aim to give special attention to the complexity of the assertions that the universe was obtained in a particular way by forcing. We shall furthermore want to apply theorem~\ref{Theorem.Grounds} not only in the full set-theoretic universe $V$, but also in $V_\theta$, whenever $\theta$ is a limit of $\beth$-fixed points of arbitrarily large cofinality below $\theta$. (It will suffice, for example, that $\theta$ is a fixed point of the enumeration of $\beth$-fixed points.) So let us now explain some of those ideas and finer details.

The proof makes use of the following central definition of~\cite{Hamkins2003:ExtensionsWithApproximationAndCoverProperties}, concerning extensions $W\of U$ of transitive models of set theory, where $\delta$ is a cardinal in $U$.
\begin{enumerate}
 \item  The extension $W\of U$ satisfies the {\df
     $\delta$-approximation property} if whenever $A\of
     W$ is a set in $U$ and $A\intersect a\in W$
     for any $a\in W$ of size less than $\delta$ in
     $W$, then $A\in W$.
 \item The extension $W\of U$ satisfies the {\df
     $\delta$-cover property} if whenever $A\of W$ is a
     set of size less than $\delta$ in $U$, then
     there is a covering set $B\in W$ with $A\of B$ and
     $|B|^W<\delta$.
\end{enumerate}
The core fact is that every inner model $W\of V$ of \ZFC\ exhibiting the $\delta$-approximation and cover properties and with the right $\delta^+$ is uniquely characterized by those facts and its power set $P(\delta)$. A level-by-level analogue of this is stated in theorem~\ref{Theorem.ApproximationCoverImpliesM=N}. Since lemma~\ref{Lemma.ClosurePoint} shows that every set-forcing extension exhibits the $\delta$-approximation and cover properties for some $\delta$ (note that $\Qdot$ can be trivial there), and since $P(\delta)^W$ is determined via the $\delta$-approximation property by $({}^\ltdelta 2)^W$, it follows that every ground $W$ will be definable from the parameter $r=({}^\ltdelta 2)^W$ for such a $\delta$, and in theorem~\ref{Theorem.Grounds}, we will accordingly have $W=W_r$ for $r=({}^\ltdelta 2)^W$. Jonas Reitz~\cite{Reitz2006:Dissertation,Reitz2007:TheGroundAxiom} isolated the convenient and economical theory $\ZFC_\delta$, making for an easy statement of theorem~\ref{Theorem.ApproximationCoverImpliesM=N}. Specifically, $\ZFC_\delta$ has the axioms of Zermelo set theory, the axiom of choice, the $\leqdelta$-replacement axiom (meaning instances of the replacement axiom for functions with domain $\delta$, a fixed regular cardinal), together with the axiom asserting that every set is coded by a set of ordinals. The theory $\ZFC_\delta$ can be formalized in the language of set theory augmented by a constant symbol for $\delta$, or it can be viewed as making assertions about a particular regular cardinal $\delta$ that has already been fixed. For example, assuming \ZFC\ in the background, then for any regular cardinal $\delta$ and $\beth$-fixed point $\theta$ with cofinality larger than $\delta$, it is an easy exercise to verify that $V_\theta\satisfies\ZFC_\delta$ using that $\delta$.

A forcing notion $\Q$ is {\df $\ltkappa$-closed}, if any descending sequence of conditions in $\Q$ of length less than $\kappa$ has a lower bound in $\Q$. More generally, $\Q$ is {\df strategically} $\ltkappa$-closed, if there is a strategy $\tau$ enabling player II to continue legal play in the game of length $\kappa$, where the players take turns in specifying the next element in a descending sequence $\<p_\alpha\st \alpha<\kappa>$ from $\Q$, with player II going first at limit stages (so player I wins if during play a descending sequence of length less than $\kappa$ is constructed that has no lower bound for player II to play).

\begin{theorem}[{Hamkins, see \cite[lemma 7.2]{Reitz2007:TheGroundAxiom}}]\label{Theorem.ApproximationCoverImpliesM=N}
Suppose that $M$, $N$ and $U$ are transitive models of $\ZFC_\delta$, where $\delta$ is a fixed regular cardinal, that $M\of U$ and $N\of U$ have the $\delta$-approximation and $\delta$-cover properties, and that $P(\delta)^M=P(\delta)^N$ and $(\delta^+)^M=(\delta^+)^N=(\delta^+)^U$. Then~$M=N$.
\end{theorem}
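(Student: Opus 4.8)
The plan is to prove that $M$ and $N$ contain exactly the same sets, and since $\ZFC_\delta$ includes the axiom that every set is coded by a set of ordinals, it suffices to establish that $P(\gamma)^M=P(\gamma)^N$ for every ordinal $\gamma$. I would prove this by induction on $\gamma$. For $\gamma\leq\delta$ the claim is immediate from the hypothesis $P(\delta)^M=P(\delta)^N$, since any subset of such a $\gamma$ is a subset of $\delta$. The agreement $(\delta^+)^M=(\delta^+)^N=(\delta^+)^U$ is used throughout to guarantee that $\delta$ is genuinely a regular cardinal in each of the three models and that the notions ``has size less than $\delta$'' and ``has order type less than $\delta$'' coincide in $M$, $N$ and $U$; this is what legitimizes the cover- and approximation-property applications below.

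For the inductive step at $\gamma>\delta$, by symmetry it is enough to show that every $A\of\gamma$ with $A\in M$ also lies in $N$. Here I would invoke the $\delta$-approximation property of $N$: since $A\in U$ and $A\of N$, it suffices to check that $A\intersect z\in N$ for every $z\in N$ with $\Card{z}^N<\delta$, and we may assume $z\of\gamma$. Fix such a $z$, let $\eta=\ot(z)<\delta$, and let $e_z\colon\eta\to z$ be its increasing enumeration, noting $e_z\in N$. Writing $D=\set{\zeta<\eta\mid e_z(\zeta)\in A}$, we have $A\intersect z=\set{e_z(\zeta)\mid\zeta\in D}$, so it is enough to place $D$ into $N$. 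The point is that each proper initial segment is already there: for $\xi<\eta$ the ordinal $e_z(\xi)$ lies below $\gamma$, so $A\intersect e_z(\xi)$ is a bounded subset of $\gamma$ belonging to $M$, whence by the induction hypothesis $A\intersect e_z(\xi)\in N$; combining this with $e_z\restriction\xi\in N$ gives $D\intersect\xi=(e_z\restriction\xi)^\inverse(A\intersect e_z(\xi))\in N$. If $\eta$ is a successor, then $z$ is bounded below $\gamma$ and the same computation places $D$ itself in $N$ at once, so we may assume $\eta$ is a limit, whence $D=\Union_{\xi<\eta}(D\intersect\xi)$.

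The remaining difficulty---and the step I expect to be the main obstacle---is that when $z$ is unbounded in $\gamma$ the set $D$ is not determined by its bounded initial segments in any way visible inside $N$, since the sequence $\langle D\intersect\xi\mid\xi<\eta\rangle$ naturally sits again at level $\gamma$ and the induction gives no purchase on it. I would resolve this using the cover property of $N$. The family $\set{D\intersect\xi\mid\xi<\eta}$ is a $\of$-increasing chain of subsets of $\eta$, every member of which lies in $N$, and it has size less than $\delta$ in $U$; so by the $\delta$-cover property of $N$ there is some $\mathcal E\in N$ with $\mathcal E\of P(\eta)$, $\Card{\mathcal E}^N<\delta$, and the whole chain contained in $\mathcal E$. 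Now form inside $N$ the set $\mathcal F$ of all unions of $\of$-chains drawn from $\mathcal E$. Then $\mathcal F\in N$, every member of $\mathcal F$ is a subset of $\eta$, and $D=\Union_{\xi<\eta}(D\intersect\xi)$ is itself the union of a chain in $\mathcal E$, so $D\in\mathcal F$ and hence $D\in N$ by transitivity of $N$. Therefore $A\intersect z=\set{e_z(\zeta)\mid\zeta\in D}\in N$, as required, and the approximation property of $N$ yields $A\in N$. Running the symmetric argument for sets in $N$ completes the induction and gives $M=N$.
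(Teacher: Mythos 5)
Your reduction to sets of ordinals, the base case, and the easy cases (the set $z$ bounded in $\gamma$, or $\eta$ a successor) are all fine, and you correctly identified where the difficulty lies---but your resolution of that difficulty contains a genuine gap. The set $\mathcal F$ you form \emph{inside} $N$ consists of the unions $\bigcup c$ where $c$ ranges over chains $c\subseteq\mathcal E$ with $c\in N$. The chain you need, $\mathcal D=\{D\cap\xi\mid\xi<\eta\}$, lives only in $U$: nothing puts it, or any internal chain with the same union, into $N$. (Indeed, if some chain in $N$ with union $D$ existed, you would conclude $D\in N$ immediately, with no need for $\mathcal E$ or $\mathcal F$ at all.) So the inference ``$D$ is the union of a chain in $\mathcal E$, hence $D\in\mathcal F$'' is unjustified, and in fact the principle it relies on is false. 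Concretely: let $U=N[g]$ be a Cohen-real extension of $N$, which has the $\delta$-approximation and cover properties over $N$ for $\delta=\omega_1$ by lemma~\ref{Lemma.ClosurePoint} (with trivial tail forcing), and let $D\subseteq\omega=\eta$ be the Cohen real. Every proper initial segment $D\cap n$ is finite, hence in $N$, and one may take $\mathcal E=[\omega]^{<\omega}\in N$ as the cover; but then $\mathcal F$ as computed in $N$ is just $P(\omega)^N$, and $D\notin N$. The same example defeats your whole inductive step, not merely this lemma-within-it: your argument uses about $A$ only that its bounded restrictions $A\cap\beta$, $\beta<\gamma$, lie in $N$, together with the approximation and cover properties of $N\subseteq U$, so if it worked it would prove that \emph{any} $A\in U$ with all bounded restrictions in $N$ lies in $N$. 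But with $\gamma>\delta$ of cofinality $\omega$, a cofinal sequence $\langle\gamma_n\mid n<\omega\rangle\in N$, and $A=\{\gamma_n\mid n\in D\}$, all bounded restrictions of $A$ are finite and hence in $N$, while $A\notin N$.

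The structural symptom of the gap is that after the base case your induction never again touches the hypotheses on $M$: neither the approximation and cover properties of $M\subseteq U$ nor the agreement $P(\delta)^M=P(\delta)^N$ and $(\delta^+)^M=(\delta^+)^N=(\delta^+)^U$ is used, and these are precisely what exclude the Cohen-type configuration above (there is no suitable $M$ containing that $A$). Note that the paper does not prove this theorem itself but cites \cite[lemma 7.2]{Reitz2007:TheGroundAxiom}; in that argument the hard case is exactly the one you flagged---where the approximating set is unbounded in $\gamma$, equivalently where the relevant cofinality is below $\delta$---and it is handled by playing the two models against each other, interleaving the cover properties of $M$ and $N$ to build mutually covered chains and using the $P(\delta)$-agreement and the correctness of $\delta^+$ to transfer the resulting small data between the two models. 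Any repair of your proof must bring $M$'s side of the hypotheses into the limit case; the cover property of $N$ alone cannot do it.
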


Hamkins and Johnstone observed (2012) that we may easily weaken the assumption that $P(\delta)^M=P(\delta)^N$ to the assumption merely that $({}^{\ltdelta}2)^M=({}^\ltdelta 2)^N$, since under the $\delta$-approximation property these are equivalent: if $A\of\delta$ and $A\in M$, then every initial segment of $A$ is in $({}^\ltdelta 2)^M$ and hence would also be in $N$ under that assumption, making $A\in N$ by the $\delta$-approximation property; and similarly in the other direction for $A\in N$, leading to $P(\delta)^M=P(\delta)^N$. In the case that $\delta=\lambda^+$, it suffices merely that $P(\lambda)^M=P(\lambda)^N$, since $P(\lambda)$ determines ${}^{\ltlambda^+}2$, which as we said ensures $P(\lambda^+)^M=P(\lambda^+)^N$ via the $\delta$-approximation property. We shall henceforth regard these improvements as a part of theorem~\ref{Theorem.ApproximationCoverImpliesM=N}.

\goodbreak
\begin{lemma}[{\cite[lemma
13]{Hamkins2003:ExtensionsWithApproximationAndCoverProperties}, see \cite[lemma 12]{HamkinsJohnstone2010:IndestructibleStrongUnfoldability} for an improved proof, following~\cite{Mitchell2003:ANoteOnHamkinsApproximationLemma}}]\label{Lemma.ClosurePoint}
Suppose that $V[g][G]$ is the forcing extension via $g*G\of\P*\Qdot$, where $\P$ is nontrivial, has cardinality less than a regular cardinal $\delta$ for which $\Qdot$ is forced to be strategically $\ltdelta$-closed. Then the extension $V\of V[g][G]$ satisfies the $\delta$-approximation and $\delta$-cover properties.
\end{lemma}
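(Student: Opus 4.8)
The plan is to verify the two properties separately, reducing throughout to the case in which the relevant sets are sets of ordinals (coding arbitrary sets of $V$ by sets of ordinals), and to treat the iteration $\P*\Qdot$ as a whole rather than one factor at a time, since the interaction between the small bottom factor and the closed tail is exactly what drives the argument.

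The $\delta$-cover property is the routine half. Since $\Qdot$ is forced to be strategically $\ltdelta$-closed, it is $\ltdelta$-distributive over $V[g]$ and hence adds no new $\ltdelta$-sequences of $V[g]$-ordinals; so any $A\of\Ord$ with $A\in V[g][G]$ and $|A|^{V[g][G]}<\delta$ is, via its enumeration of length less than $\delta$, already an element of $V[g]$, of size less than $\delta$ there. It then suffices to cover small sets across the small forcing $V\of V[g]$. Given such an $A\in V[g]$ with enumerating function $f\colon\gamma\to\Ord$, $\gamma<\delta$, and a $\P$-name $\dot f$ for it, set $B=\{\beta : \exists p\in\P\ \exists\xi<\gamma\ (p\Vdash \dot f(\check\xi)=\check\beta)\}$. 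This $B$ lies in $V$, contains $A$, and has size at most $|\P|\cdot\gamma<\delta$, because $\delta$ is regular and each pair $(p,\xi)$ contributes at most one ordinal; this is the desired cover.

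The $\delta$-approximation property is the crux, and here the hypotheses that $\P$ is nontrivial and of size below $\delta$ are genuinely needed: closure of the tail alone does not suffice, since for instance $\Add(\delta,1)$ is $\ltdelta$-closed yet adds a fresh subset of $\delta$, every piece of size less than $\delta$ of which lies in the ground model. So suppose toward a contradiction that some condition forces $\dot A$ to be a subset of $\check\lambda$ all of whose $\ltdelta$-approximations by ground-model sets lie in $V$, yet with $\dot A\notin V$. The strategy is a mutual-genericity argument. Force with $\P\times\P$ below the doubled condition to obtain mutually $V$-generic filters $g_0,g_1$, and then, using the strategic $\ltdelta$-closure of the two copies of $\Qdot$, build $\Q$-generic filters $G_0$ over $V[g_0]$ and $G_1$ over $V[g_1]$ that are mutually generic and whose plays are constructed in tandem via the player~II strategy. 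Let $A_0$ and $A_1$ be the corresponding interpretations of $\dot A$. Each of $A_0,A_1$ has all of its small approximations in the common ground $V$, and the two plays are matched so that $A_0$ and $A_1$ agree on every $\ltdelta$-approximation; since a set all of whose small approximations are pinned down is thereby reconstructible, this yields $A_0=A_1$. But then this common set lies in $V[g_0*G_0]\intersect V[g_1*G_1]$, which equals $V$ by mutual genericity, contradicting that $\dot A$ was forced not to be in $V$.

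The main obstacle is precisely the matching construction in the previous paragraph: one must build the two tail generics $G_0$ and $G_1$ so that, despite living over the incompatible $\P$-generics $g_0$ and $g_1$, the two interpretations of $\dot A$ cannot be separated on any set of size less than $\delta$ coming from $V$. This is where the three ingredients conspire. The smallness $|\P|<\delta$ guarantees that only fewer than $\delta$ many bits of the bottom generic are relevant to any single small approximation; the strategic $\ltdelta$-closure supplies the lower bounds needed to construct the two length-$\delta$ plays in tandem while meeting the relevant dense sets; and the nontriviality of $\P$ is what makes the product $\P\times\P$ and the resulting intersection fact $V[g_0*G_0]\intersect V[g_1*G_1]=V$ do real work. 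Carefully organizing this amalgamation, following Mitchell's refinement, is the technical heart of the proof.
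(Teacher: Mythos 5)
Your cover-property argument is correct and is essentially the standard one: strategic $\ltdelta$-closure gives $\ltdelta$-distributivity, so small sets of ordinals in $V[g][G]$ already lie in $V[g]$, and the union-of-possible-values name argument covers across the small factor, using $|\P|<\delta$ and the regularity of $\delta$. The approximation half, however, has a genuine gap, and it sits exactly where you yourself locate ``the technical heart'': the tandem construction you call for cannot exist. A genuine $V[g_0]$-generic filter $G_0\of\Q$ cannot be \emph{built} by a play of length $\delta$, since $V[g_0]$ has in general far more than $\delta$ many dense subsets of $\Q$ to be met; genuine generics must come from forcing. But if you obtain $g_0*G_0$ and $g_1*G_1$ by forcing with the product $(\P*\Qdot)\times(\P*\Qdot)$---which is what you need for the intersection fact $V[g_0][G_0]\intersect V[g_1][G_1]=V$ to be available---then the matching $A_0=A_1$ is precisely the configuration your own contradiction shows to be impossible: mutual genericity together with ``$\dot A\notin V$ is forced'' already implies $A_0\neq A_1$. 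Note also that ``agreeing on every $\ltdelta$-approximation'' is literally the same statement as $A_0=A_1$ (consider singletons), so the sentence deriving $A_0=A_1$ from matched approximations is circular; the whole burden of the proof is hidden in the unexplained claim that the two plays can be correlated at every ordinal while remaining mutually generic. If instead you settle for the non-generic descending sequences that a length-$\ltdelta$ construction via the strategy can actually deliver, then $A_0$ and $A_1$ are not interpretations of $\dot A$ by generic filters and the intersection lemma no longer applies. Your diagnosis of where nontriviality of $\P$ enters is also off: the intersection fact holds for mutually generic extensions of any forcing, trivial or not.

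The proof cited in the paper (Hamkins's key lemma, with Mitchell's simplification as presented in the Hamkins--Johnstone account) avoids external generics entirely and is a fusion argument carried out in $V$ with names: using $|\P|<\delta$ and the regularity of $\delta$, one runs player II's strategy for $\Qdot$ through fewer than $\delta$ many rounds, at each round handling the fewer than $\delta$ many pairs of conditions in $\P$ and extending the current $\Q$-name condition so as to force disagreements about $\dot A$ at ordinals collected into a set $a\in V$ of size less than $\delta$. Below the resulting master condition, the single approximation $A\intersect a$ determines the $\P$-generic filter $g$; since $g\notin V$---this is where the nontriviality of $\P$ is really used---one concludes $A\intersect a\notin V$, contradicting the approximation hypothesis directly. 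So the contradiction is obtained from one small approximation via fusion inside $V$, not from $A\notin V$ via mutual genericity; you would need to replace your mutual-genericity scheme with an argument of this kind.
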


These results are very near to establishing the ground model definability theorem, as well as theorem~\ref{Theorem.Grounds}. We say that a parameter $r$ {\df succeeds in defining a pseudo-ground}, if there is a regular cardinal $\delta$, such that for every $\beth$-fixed point $\gamma$ of cofinality larger than $\delta$, there is a transitive $M\of V_\gamma$, with the $\delta$-approximation and $\delta$-cover properties, such that $M\satisfies\ZFC_\delta$, $(\delta^+)^M=\delta^+$ and $r=({}^\ltdelta 2)^M$. By theorem~\ref{Theorem.ApproximationCoverImpliesM=N}, this $M$ is unique, and if $r$ succeeds in this way, then we let $U_r$ be the union of all such $M$ as $\gamma$ increases without bound. It follows that $U_r\satisfies\ZFC$ and $U_r\of V$ has the $\delta$-approximation and cover properties, the correct $\delta^+$ and $r=({}^\ltdelta 2)^{U_r}$. Conversely, if $U\of V$ is any {\df pseudo-ground}, meaning that it is a model of \ZFC\ with the $\delta$-approximation and cover properties to $V$ for some regular cardinal $\delta$, with $(\delta^+)^U=\delta^+$, then $U=U_r$ for $r=({}^\ltdelta 2)^U$.

Similarly, we say that a parameter $r$ {\df succeeds in defining a ground}, if there is some poset $\Q$ and filter $G\of\Q$, such that for every $\beth$-fixed point $\gamma$ of cofinality larger than $\delta$, there is a transitive $M\of V_\gamma$ with the $\delta$-approximation and cover properties, such that $M\satisfies\ZFC_\delta$, $(\delta^+)^M=\delta^+$, $r=({}^\ltdelta 2)^M$ and $G$ is $M$-generic for $\Q$ with $M[G]=V_\gamma$. In other words, $r$ defines a ground if it defines a pseudo-ground that is a ground. Again,  by theorem~\ref{Theorem.ApproximationCoverImpliesM=N}, for each such $\gamma$ the set $M$ is unique when it exists, and when $r$ succeeds in this way we denote by $W_r$ the union of all such $M$ as $\gamma$ increases without bound. It follows that $W_r\satisfies\ZFC$ is a ground of $V$ via $V=W_r[G]$, and conversely, every ground of $V$ by set forcing arises as some such $W_r$. This establishes theorem~\ref{Theorem.Grounds}, as well as the ground definability theorem, as consequences of theorem~\ref{Theorem.ApproximationCoverImpliesM=N}. (Note, for convenience, when $r$ does not succeed in these definitions, we may define $U_r=V$ and $W_r=V$, respectively, to arrive at a total indexing of all pseudo-grounds and grounds, as stated in theorem~\ref{Theorem.Grounds}.)

In this article, we should like to apply these definitions not only in $V$, where we have \ZFC\ as a background theory, but also in $V_\theta$, when $\theta$ is merely a limit of $\beth$-fixed points $\gamma$ of cofinality larger than $\delta$. Inside such a model $V_\theta$, theorem~\ref{Theorem.ApproximationCoverImpliesM=N} still implies the uniqueness of $M\of V_\gamma$ for each of those $\gamma$ below $\theta$, and so the definition of $W_r$ still makes sense. Further, if $V=W[G]$ is actually a forcing extension by $W$-generic $G\of\Q\in W$, and $\theta$ is above the rank of $\Q$, then it will see suitable $\beth$-fixed points $\gamma$ of cofinality larger than $\delta=|\Q|^+$, for which $M=W_\gamma\of V_\gamma$ will witness the desired $\delta$-approximation and cover properties, so that $W_\theta=W_r^{V_\theta}$. And conversely, any such $W_r^{V_\theta}$ with $V_\theta=W_r[G]$ for some $W_r^{V_\theta}$-generic $G\of\Q\in W_r^{V_\theta}$ will serve our purpose as a ``ground'' with respect to $V_\theta$ in this weak theory context, even though $W_r^{V_\theta}$ may not satisfy all of \ZFC. Any such $W_r^{V_\theta}$ will at least be a union of a nested tower of $\ZFC_\delta$ models.

We briefly illustrate one of the complexity calculations. Consider for a fixed poset $\Q$ and filter $G\of\Q$ the assertion that parameter $r$ succeeds in defining a ground $W_r$ using $\Q$ and $G$, in other words the assertion, ``the universe is $W_r[G]$, obtained by forcing over $W_r$ with the $W_r$-generic filter $G\of\Q\in W_r$.'' This assertion has complexity $\Pi_2(\Q,G,r)$, because the failure of this assertion is observable inside any sufficiently large $V_\xi$, which will see a $\beth$-fixed point $\gamma<\xi$ of cofinality larger than $\delta$, and larger than the rank of $\Q$, for which there is no $M\of V_\gamma$ satisfying $\ZFC_\delta$ and having the $\delta$-approximation and cover properties and the correct value of $\delta^+$, for which $r=({}^\ltdelta 2)^M$ and $G\of\Q\in M$ is $M$-generic, with $V_\gamma=M[G]$. The point now is that an assertion of the form, ``$\exists \xi$ such that $V_\xi$ satisfies $\psi$,'' for an assertion $\psi$ of any complexity, has complexity $\Sigma_2$, and so our statement has complexity $\Pi_2$ in the parameters $\Q$, $G$ and $r$. Similar such analysis will arise in the proof of main theorem~\ref{Theorem.Sigma2Sigma3Extendible}.

Let us conclude this section by providing definitions of all the large cardinal properties appearing in main theorem~\ref{Theorem.MainTheorem}. These are mostly standard notions. A cardinal $\kappa$ is {\df superstrong} if it is the critical point of an elementary embedding $j:V\to M$ from the set-theoretic universe $V$ to a transitive class $M$, for which $V_{j(\kappa)}\of M$. The cardinal $\kappa$ is {\df almost huge}, if it is the critical point of an elementary embedding $j:V\to M$, for which $M^{{\lt}j(\kappa)}\of M$; it is fully {\df huge}, if also $M^{j(\kappa)}\of M$. In each case, the {\df target} is simply the ordinal $j(\kappa)$. The cardinal $\kappa$ is {\df superhuge} if it is huge with arbitrarily large targets. The cardinal $\kappa$ is a {\df rank-into-rank} cardinal if it is the critical point of an embedding $j:V_\lambda\to V_\lambda$, and here again $j(\kappa)$ is the target  (although $\lambda$, which is strictly larger than $j(\kappa)$, may be even more relevant). A cardinal $\kappa$ is {\df extendible}, if for every $\eta$ it is $\eta$-extendible, meaning that it is the critical point of an elementary embedding $j:V_{\kappa+\eta}\to V_\theta$ for some ordinal $\theta$, and again $j(\kappa)$ is the target. In particular, $\kappa$ is $1$-extendible if there is an elementary embedding $j:V_{\kappa+1}\to V_{\theta+1}$ with critical point $\kappa$; and it is $0$-extendible if it is inaccessible and $V_\kappa\elesub V_\theta$ for some $\theta>\kappa$, called the target.

Somewhat lesser known (see~\cite{HamkinsJohnstone2014:ResurrectionAxiomsAndUpliftingCardinals,HamkinsJohnstone:StronglyUpliftingCardinalsAndBoldfaceResurrection}), an inaccessible cardinal $\kappa$ is {\df uplifting}, if $V_\kappa\elesub V_\theta$ for arbitrarily large inaccessible cardinals $\theta$, and it is {\df pseudo-uplifting} if $V_\kappa\elesub V_\theta$ for arbitrary large ordinals $\theta$, without insisting that $\theta$ is inaccessible. Hamkins and Johnstone define that an inaccessible cardinal $\kappa$ is {\df weakly superstrong} if for every transitive set $M$ of size $\kappa$ with $\kappa\in M$ and $M^\ltkappa\of M$, there is a transitive set $N$ and an elementary embedding $j:M\to N$ with critical point $\kappa$, for which $V_{j(\kappa)}\of N$; and it is {\df weakly almost huge} if for every such $M$ there is such $j:M\to N$ for which $N^{<j(\kappa)}\of N$; and as usual $j(\kappa)$ is referred to as the target. A cardinal $\kappa$ is {\df superstrongly unfoldable} if it is weakly superstrong with arbitrarily large targets, and it is {\df almost hugely unfoldable} if it is weakly almost huge with arbitrarily large targets. Remarkably, these concepts are equivalent (see full details in~\cite{HamkinsJohnstone:StronglyUpliftingCardinalsAndBoldfaceResurrection}), and both are equivalent to $\kappa$ being {\df strongly uplifting}, which means that for every $A\of\kappa$, there are arbitrarily large $\theta$ and $A^*\of\theta$ for which $\<V_\kappa,{\in},A>\elesub\<V_\theta,{\in},A^*>$. One may assume without loss of generality that $\theta$ is inaccessible, weakly compact, totally indescribable or more here and similarly with the targets $j(\kappa)$ of the superstrong and almost huge unfoldability embbeddings, respectively.

For two transitive sets $M$ and $N$, we shall write $M\elesub_n N$ or sometimes $M\elesub_{\Sigma_n} N$ to mean that $\<M,{\in}>$ is a $\Sigma_n$-elementary substructure of $\<N,{\in}>$, meaning that $M\of N$ and they agree on the truth of $\Sigma_n$ assertions having parameters in $M$. An ordinal $\eta$ is {\df $\Sigma_n$-correct} if $V_\eta\elesub_n V$. (This is trivial when $n=0$, so we shall consider the concept only when $n\geq 1$.) The class of all $\Sigma_n$-correct cardinals, denoted $C^{(n)}$, is closed and unbounded in $\Ord$ by an application of the reflection theorem. It is easy to see that every $\Sigma_1$-correct ordinal $\eta$ must in fact be a strong limit cardinal, and indeed a $\beth$-fixed point, as well as a fixed point of the enumeration of $\beth$-fixed points (and consequently a limit of $\beth$-fixed points of arbitrarily large cofinality below $\eta$). The least $\Sigma_n$-correct ordinal has cofinality $\omega$, so these cardinals themselves can be singular. Meanwhile, a cardinal $\kappa$ is {\df $\Sigma_n$-reflecting} if it is regular and $\Sigma_n$-correct (or, equivalently, inaccessible and $\Sigma_n$-correct). An unusual meta-mathematical subtlety of these notions is that we have no uniform-in-$n$ concept of the $\Sigma_n$-correct or $\Sigma_n$-reflecting cardinals, but rather a separate notion for each natural number $n$ in the meta-theory. For example, although we can prove in \ZFC\ the existence of the club $C^{(n)}$ of $\Sigma_n$-correct cardinals for each finite $n$ in the meta-theory, we cannot prove or even express the universal assertion ``$\forall n\,\exists\,\Sigma_n$-correct $\kappa$.''\footnote{To see this, let $M\satisfies\ZFC$ be $\omega$-nonstandard and consider the cut determined by the supremum of the least $\Sigma_n$-correct cardinal, for standard $n$ only. This cut is a first-order elementary substructure of $M$, with the same nonstandard $\omega$, but the collection of $n$ for which it has a $\Sigma_n$-correct cardinal has standard $n$ only, and so does not exist in $M$. Meanwhile, in  \GBC\ models having a satisfaction class for first-order truth, such as in any \KM\ model, we do have a concept of $\Sigma_n$-correct that is uniform in $n$, and every model of \KM, even those that are $\omega$-nonstandard, satisfies ``$\forall n\,\exists\, \Sigma_n$-correct $\kappa$.'' Indeed, \KM\ proves that $V$ is the union of a closed unbounded chain of elementary rank initial segments $V_\kappa\elesub V$, and this notion of fully reflecting is expressible in \KM, using the definable truth predicate for first-order truth.} Similarly, the situation $V_\kappa\elesub V$ occurring when $\kappa$ is {\df fully correct}, that is, $\Sigma_n$-correct for all $n$, is not expressible by a single first-order formula, although one can express the theory $``V_\kappa\elesub V$'' as a scheme in the first-order language of set theory augmented by a constant symbol for $\kappa$, asserting that $\kappa$ is $\Sigma_n$-correct for every $n$. Although some find it surprising, this theory is equiconsistent with \ZFC\ by a simple compactness argument: every finite subset of this scheme is realized in any model of \ZFC\ by the reflection theorem. In particular, from $V_\kappa\elesub V\satisfies\ZFC$ we may not conclude that $V\satisfies\Con(\ZFC)$, since the situation is that $V$ knows only of each axiom of \ZFC\ separately that it holds in $V_\kappa$---the assumption $V_\kappa\elesub V$ is a scheme about each axiom separately---and in general we may not put these together to deduce that $V$ satisfies the assertion ``$V_\kappa\satisfies\ZFC$.''

Let us define next that a cardinal $\kappa$ is {\df $(\Sigma_n, 0)$-extendible}---for readability we shall shorten this to just {\df $\Sigma_n$-extendible} in this paper---with target $\theta$, if $V_\kappa\elesub_n V_\theta$. More generally, $\kappa$ is $(\Sigma_n,\eta)$-extendible if there is a $\Sigma_n$-elementary embedding $j:V_{\kappa+\eta}\to V_\theta$, with critical point $\kappa$, for some ordinal $\theta$; this is a weakening of $\kappa$ being $\eta$-extendible. Note that any $\Sigma_n$-correct cardinal is $\Sigma_n$-extendible with arbitrarily large targets (one might say, therefore, that it is super $\Sigma_n$-extendible). As with $\Sigma_n$-reflection, the concept is trivial when $n=0$, so we consider it only when $n\geq 1$. It is an elementary exercise to see that if $\kappa$ is $\Sigma_1$-extendible, witnessed by $V_\kappa\elesub_1 V_\theta$, then $\kappa$ must be a cardinal, a strong limit cardinal, a $\beth$-fixed point, a fixed point of the enumeration of $\beth$-fixed points and consequently a limit of $\beth$-fixed points of arbitrarily large cofinality below $\kappa$. Note also that if $V_\kappa\elesub_n V_\theta$ and $n\geq 1$, then both of these sets satisfy a robust fragment of \ZFC. For example, in addition to extensionality, foundation, pairing, union, power set and choice, we also have the full separation axiom---making for the full Zermelo theory \ZC---simply because these models have the true power set operation of $V$; we also get the collection axiom for $\Sigma_n$ relations in $V_\kappa$, since $V_\kappa\in V_\theta$ serves as a collecting set in $V_\theta$ for any $\Sigma_n$ property. So these are robust models of set theory, even if they don't necessarily satisfy all of \ZFC. And while the cardinals $\kappa$ and $\theta$ might be singular---so we must be on our guard---the $\Sigma_n$-collection axiom exactly ensures that they are $\Sigma_n$-regular, in the sense that there are no $\Sigma_n$-definable singularizing class functions. The meta-mathematical difficulties that we mentioned earlier with the $\Sigma_n$-correct cardinals do not arise with $\Sigma_n$-extendible cardinals, since we make no reference here to truth in $V$, but rather only truth in $V_\kappa$ and $V_\theta$, where we do have a uniform-in-$n$ account.

Finally, for definiteness we say officially that a large cardinal $\kappa$ is {\df Laver indestructible} for a given large cardinal notion, if it retains that large cardinal property in every $\ltkappa$-directed closed forcing extension. Although this class of forcing notions---$\ltkappa$-directed closed forcing---is the class considered by Laver in his supercompactness indestructibility result~\cite{Laver78}, in fact our results here do not depend much on the directed-closed aspect. In particular, because the main theorems establish a superdestructibility result, for which the large cardinal property is destroyed by any forcing notion from a wide class, the numbered claims of main theorem~\ref{Theorem.MainTheorem} will remain true if one has a modified understanding of Laver indestuctibility, provided only that some of those strategically $\ltkappa$-closed forcing notions remain in the new class. For this reason, it is not actually important for us to be very precise about what we mean by Laver indestructibility, although for definiteness, we have been.

\section{Proving the main theorems}

We are now ready to prove the main theorems, beginning with main theorem~\ref{Theorem.Sigma2Sigma3Extendible} and the case of $\Sigma_3$-extendible cardinals, and then deducing main theorem~\ref{Theorem.MainTheorem} as a corollary. We are unsure whether the assumption that $\kappa$ is $\Sigma_2$-extendible in the ground model can be dropped or weakened, although it can be if one makes further assumptions on the forcing $\Q$, as we explain in section \ref{Section.Improvements}.

\begin{theorem}[Main theorem~\ref{Theorem.Sigma2Sigma3Extendible}]
Suppose that $V_\kappa\elesub_2 V_\lambda$ for some $\lambda\geq\eta$ and that $G\of\Q$ is $V$-generic for nontrivial strategically $\ltkappa$-closed forcing $\Q\in V_\eta$. Then for all $\theta\geq\eta$,
$$V_\kappa=V[G]_\kappa\not\elesub_3 V[G]_\theta.$$
\end{theorem}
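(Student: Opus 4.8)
The plan is to exploit the fact, emphasized in the background section, that ``$V$ was obtained by nontrivial forcing from a ground $W$ with the $\delta$-approximation and $\delta$-cover properties'' is a first-order expressible and relatively low-complexity assertion, and then to derive a contradiction from the transfer of this assertion up an elementary chain via the hypothesized $\Sigma_3$-extendibility in $V[G]$. Concretely, suppose toward contradiction that $V_\kappa = V[G]_\kappa \elesub_3 V[G]_\theta$ for some $\theta \geq \eta$. The key structural observation is that forcing by $\Q \in V_\eta$ over $V$ does not disturb $V_\kappa$, so $V_\kappa = V[G]_\kappa$; but in $V[G]_\theta$ the model $\Q$ and the generic $G$ are visible, and by Lemma~\ref{Lemma.ClosurePoint} the extension $V \of V[G]$ (and level-by-level, $V_\gamma \of V[G]_\gamma$ for suitable $\beth$-fixed points $\gamma$) enjoys the $\delta$-approximation and $\delta$-cover properties for $\delta = |\Q|^+$. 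I would set $r = ({}^{\ltdelta}2)^{V}$, so that $V$ appears as the ground $W_r$ inside $V[G]$, witnessed at each level by $W_r^{V_\gamma}$.

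The heart of the argument is a complexity calculation together with a reflection via the embedding. First I would verify that the assertion ``$r$ succeeds in defining a ground $W_r$ using $\Q$ and $G$, via $V = W_r[G]$'' is $\Pi_2(\Q,G,r)$, exactly as carried out in the illustrative calculation of the background section. Now here is the crucial asymmetry the proof must leverage: in $V[G]_\theta$ the forcing $\Q$, the filter $G$, and the parameter $r$ are all present and the $\Pi_2$ assertion holds, recording that $V[G]_\theta$ was obtained by nontrivial forcing over a proper ground. But in $V_\kappa = V[G]_\kappa$ the situation is different, because the $\Sigma_2$-extendibility hypothesis $V_\kappa \elesub_2 V_\lambda$ controls what $V_\kappa$ can correctly see about itself as a forcing extension. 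The plan is to argue that $V_\kappa$ cannot satisfy the matching statement---that it is a nontrivial forcing extension of a ground defined by the relevant parameter with target at or above $\eta$---because the relevant parameters and the witnessing $\beth$-fixed points would have to lie below $\kappa$, and $\Sigma_2$-correctness of $V_\kappa$ in $V_\lambda$ pins down the behavior of grounds at those levels, forcing $V_\kappa$ to recognize that it is \emph{not} such a nontrivial extension (the forcing $\Q$ having rank below $\eta \leq \lambda$ does not inject any new ground structure into $V_\kappa$, since $V_\kappa$ is unchanged by the forcing).

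The contradiction then comes from the elementarity: a $\Sigma_3$-elementary embedding, or here $\Sigma_3$-elementary substructure relation $V_\kappa \elesub_3 V[G]_\theta$, must preserve the $\Pi_2$ assertion (which is $\Sigma_3$ in the relevant direction as an existential-over-$V_\xi$ statement) between the two models. Since the assertion holds in $V[G]_\theta$ with witnessing parameters, a $\Sigma_3$-elementary transfer downward would force $V_\kappa$ to contain witnesses making it a nontrivial forcing extension of a corresponding ground---but I will have shown via the $\Sigma_2$-extendibility that $V_\kappa$ refutes precisely this, giving the contradiction. The main obstacle, and the step demanding the most care, is the complexity bookkeeping: I must confirm that the ``nontrivially forced over a proper ground with target $\geq \eta$'' assertion sits at exactly the $\Sigma_3$/$\Pi_2$ level needed so that $\Sigma_3$-elementarity applies, while simultaneously ensuring that $\Sigma_2$-extendibility (one quantifier weaker) suffices to control the ground structure inside $V_\kappa$ and rule out the transferred statement. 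Getting these two complexity thresholds to straddle correctly---$\Sigma_2$ to refute below, $\Sigma_3$ to transfer across---is the delicate crux, and is exactly why the hypothesis separates $\Sigma_2$ from $\Sigma_3$; I expect the nontriviality of $\Q$ to enter essentially here, since it is what makes $V$ a \emph{proper} ground and hence makes the distinguishing assertion nonvacuous.
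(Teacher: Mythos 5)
Your proposal goes wrong at its central step. You claim that the $\Sigma_2$-extendibility hypothesis $V_\kappa\elesub_2 V_\lambda$ ``pins down the behavior of grounds'' inside $V_\kappa$ and forces $V_\kappa$ to \emph{refute} the assertion that it is a nontrivial forcing extension of a ground, and you then derive the contradiction from the downward $\Sigma_3$-transfer of that assertion from $V[G]_\theta$. But no such refutation is available: nothing in the hypotheses implies any ground-axiom-like statement in $V$, let alone in $V_\kappa$. For instance, $V$ may itself have been obtained from a smaller model by adding a single Cohen real; $\kappa$ can perfectly well remain $\Sigma_2$-extendible in such a $V$, and then $V_\kappa$ genuinely \emph{is} a nontrivial forcing extension of a ground defined by a small parameter, so it satisfies the transferred $\Sigma_3$ assertion and your intended contradiction evaporates. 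The downward reflection of the statement ``for some $r$ and nontrivial $\Q$, the universe is $W_r[G]$'' is where the paper's proof \emph{begins}, not where it ends.

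What the paper actually does after that reflection: accept that $V_\kappa=W_{r_0}^{V_\kappa}[G_0]$ for some $r_0$ and $V_\kappa$-generic $G_0\of\Q_0\in V_\kappa$, and observe that with these parameters now \emph{fixed}, the assertion ``the universe is $W_{r_0}[G_0]$'' drops to complexity $\Pi_2(r_0,G_0,\Q_0)$ and therefore transfers \emph{upward} along both $V_\kappa\elesub_2 V_\lambda$ and $V_\kappa\elesub_2 V[G]_\theta$ (the latter follows from the assumed $\Sigma_3$-elementarity). Cutting down to $\eta$ yields two presentations, $V_\eta=W_{r_0}^{V_\eta}[G_0]$ and $V[G]_\eta=W_{r_0}^{V[G]_\eta}[G_0]$. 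Lemma~\ref{Lemma.ClosurePoint} then gives the $\delta$-approximation and cover properties of $V[G]_\eta$ over $W_{r_0}^{V_\eta}$ (via the small forcing $\Q_0$ followed by the strategically $\ltkappa$-closed $\Q$, with $\delta=|\Q_0|^+$) and over $W_{r_0}^{V[G]_\eta}$ (small forcing alone); since both grounds compute $({}^\ltdelta 2)$ as $r_0$ and have the correct $\delta^+$, theorem~\ref{Theorem.ApproximationCoverImpliesM=N} identifies them, so $V[G]_\eta=W_{r_0}^{V_\eta}[G_0]\of V_\eta$ (note $G_0\in V_\eta$, since it has rank below $\kappa$ and so cannot have been added by $\Q$), contradicting the nontriviality of $\Q$. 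So the $\Sigma_2$-extendibility in $V$ serves to export the ground structure of $V_\kappa$ up into $V_\eta$ on the $V$ side, so that the uniqueness theorem can match it against the same structure on the $V[G]$ side; it refutes nothing inside $V_\kappa$. Your complexity bookkeeping ($\Sigma_3$ with quantified parameters, $\Pi_2$ once they are fixed) is correct and matches the paper, but the entire second half of the argument---the upward transfers with fixed small parameters, the double application of the approximation-and-cover machinery at level $\eta$, and the final contradiction $G\in V$---is missing, and the step you propose in its place is false.
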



\begin{proof}
Suppose that $V_\kappa\elesub_2 V_\lambda$ for some $\lambda\geq\eta$, which is to say, that $\kappa$ is $\Sigma_2$-extendible in $V$ with target $\lambda\geq\eta$ and that $\Q\in V_\eta$ is a nontrivial strategically $\ltkappa$-closed notion of forcing. Suppose toward contradiction that
$V[G]_\kappa\elesub_3 V[G]_\theta$ for some $\theta\geq\eta$, or in other words, that $\kappa$ is $\Sigma_3$-extendible with target $\theta$ in the corresponding forcing extension $V[G]$. Since $\kappa$ is a $\beth$-fixed point, as well as a fixed point of the enumeration of $\beth$-fixed points (and a fixed point of the enumeration of {\it those} cardinals), and furthermore can only $\Sigma_3$-extend to such cardinals, we may assume without loss of generality, by increasing $\eta$ if necessary, that $\eta$ also is a $\beth$-fixed point and fixed point of the enumeration of $\beth$-fixed points; we may furthermore assume that $\cof(\eta)>\kappa$, simply by taking the $(\kappa^+)^\th$ next such fixed point, and this is still below $\lambda$ and $\theta$. Since $\Q$ is strategically $\ltkappa$-closed, it follows that $V[G]_\kappa=V_\kappa$, and so the two extendibility hypotheses amount to
 $$V_\kappa\elesub_2 V_\lambda\qquad\text{ and }\qquad V_\kappa\elesub_3 V[G]_\theta.$$
By elementarity, $\lambda$ and $\theta$ are also $\beth$-fixed points and fixed points of the enumeration of $\beth$-fixed points. In particular, each of these cardinals is a limit of $\beth$-fixed points of arbitrarily large cofinality below $\kappa$. Since $\Q$ is small relative to $\eta$, it follows that $V[G]_\eta=V_\eta[G]$, as well as $V[G]_\lambda=V_\lambda[G]$ and $V[G]_\theta=V_\theta[G]$. In particular, $V[G]_\theta$ is a nontrivial forcing extension of $V_\theta$ via $G\of\Q\in V_\theta$. It follows by theorem~\ref{Theorem.Grounds} applied in $V[G]_\theta$ that $V_\theta=W_r^{V[G]_\theta}$ for some parameter $r$, and so $V[G]_\theta$ satisfies the following assertion,
\begin{quote}
 ``For some parameter $r$ and nontrivial poset $\Q$, the universe is $W_r[G]$ for some $W_r$-generic filter $G\of\Q\in W_r$.''
\end{quote}
We claim that this assertion is $\Sigma_3$-expressible in the models in which we are interested, namely, in $V[G]_\theta$, $V_\lambda$ and $V_\kappa$. As in our example complexity calculation in section~\ref{Section.Background}, we can verify in $V[G]_\theta$ that the universe is $W_r[G]$ by inspecting higher and higher rank initial segments of the universe $V[G]_\theta$. Specifically, the displayed assertion above is equivalent to the assertion, ``for some cardinal $\delta$, parameter $r\of ({}^\ltdelta 2)$, nontrivial poset $\Q$ and filter $G\of\Q$, for every $Z$, if $Z=V_{\gamma+2}$ for some ordinal $\gamma$ (that is, $Z$ is the $V_{\gamma+2}$ of the universe in which we interpret the statement, as in $V[G]_{\gamma+2}$, etc.) and $Z$ thinks that $\gamma$ is a $\beth$-fixed point of cofinality larger than $\delta$ for which $V_\gamma^Z\satisfies\ZFC_\delta$, then $Z$ thinks that there is an $M\of V_\gamma^Z$ satisfying $\ZFC_\delta$ such that $M\of V_\gamma^Z$ has the $\delta$-approximation and cover property, such that $r=({}^\ltdelta 2)^M$ and $(\delta^+)^M=\delta^+$, and such that $Z=M[G]$ is the forcing extension of $M$ via $M$-generic filter $G\of\Q\in M$.'' This assertion has complexity $\Sigma_3$, since the part of the assertion about what $Z$ satisfies has all quantifiers bounded by $Z$, and the assertion that ``$Z=V_{\gamma+2}$ for some ordinal $\gamma$'' is $\Pi_1$ in $Z$, since the important part is to say that $Z$ computes its power sets correctly.

It follows now from $V_\kappa\elesub_3 V[G]_\theta$ that $V_\kappa$ must also satisfy the assertion, and so $V_\kappa=W_{r_0}^{V_\kappa}[G_0]$ for some $r_0\in V_\kappa$ and $V_\kappa$-generic filter $G_0\of\Q_0\in V_\kappa$. In particular, $V_\kappa$ satisfies the assertion ``the universe is obtained by forcing over $W_{r_0}$ via $G_0\of\Q_0$,'' an assertion with complexity $\Pi_2(r_0,G_0,\Q_0)$, as we explained in section~\ref{Section.Background}, by asserting that it holds in all the suitable rank initial segments (this is one quantifier rank less complex because we have fixed the parameters $r_0$, $G_0$ and $\Q_0$, rather than quantifying to get them). Since $V_\kappa\elesub_2 V_\lambda$, it follows that $V_\lambda=W_{r_0}^{V_\lambda}[G_0]$, using the same small parameter $r_0$ and small forcing $G_0\of\Q_0$. By cutting down to $\eta$, we also have $V_\eta=W_{r_0}^{V_\eta}[G_0]$. By the details of the indices for ground models, we may assume that $r_0=({}^\ltdelta 2)^{W_{r_0}^{V_\eta}}$, where $\delta=|\Q_0|^+$ in $V$. Combining this with the forcing $V_\eta\of V_\eta[G]$, we conclude that $V[G]_\eta=W_{r_0}^{V_\eta}[G_0][G]$ is a forcing extension of $W_{r_0}^{V_\eta}$ by a nontrivial forcing notion $\Q_0$ of size less than $\delta$ followed by strategically $\ltkappa$-closed forcing $\Q$. By lemma~\ref{Lemma.ClosurePoint}, it follows that $V[G]_\eta$ has the $\delta$-approximation and cover properties over $W_{r_0}^{V_\eta}$.

Similarly, because $V_\kappa\elesub_2 V[G]_\theta$, we know that $V[G]_\theta$ also thinks that it is obtained by forcing over $W_{r_0}^{V[G]_\theta}$ with $G_0\of\Q_0$, and so $V[G]_\theta=W_{r_0}^{V[G]_\theta}[G_0]$. By again cutting down to $\eta$, we also know $V[G]_\eta=W_{r_0}^{V[G]_\eta}[G_0]$, and furthermore we know again that $r_0=({}^\ltdelta 2)^{W_{r_0}^{V[G]_\eta}}$. Since this is forcing of size less than $\delta$, it follows that $V[G]_\eta$ also has the $\delta$-approximation and cover properties over $W_{r_0}^{V[G]_\eta}$. So the situation is that $W_{r_0}^{V_\eta}$ and $W_{r_0}^{V[G]_\eta}$ are both grounds of $V[G]_\eta$ by forcing with the $\delta$-approximation and cover properties and they have the same binary $\delta$-tree $({}^\ltdelta 2)^{W_{r_0}^{V_\eta}}=r_0=({}^\ltdelta 2)^{W_{r_0}^{V[G]_\eta}}$ and the correct $\delta^+$. It therefore follows by theorem~\ref{Theorem.ApproximationCoverImpliesM=N} that $W_{r_0}^{V_\eta}=W_{r_0}^{V[G]_\eta}$. This immediately implies $$V[G]_\eta=W_{r_0}^{V[G]_\eta}[G_0]= W_{r_0}^{V_\eta}[G_0].$$ Note that $G_0\in V_\eta$, since it is in $V[G]$ and has rank less than $\kappa$ and therefore could not have been added by the strategically $\ltkappa$-closed forcing $\Q$. Since furthermore $W_{r_0}^{V_\eta}\of V_\eta$, we conclude from the displayed equation above that $V[G]_\eta\of V_\eta$. This contradicts the nontriviality of $\Q$, thereby proving the theorem.
\end{proof}

We may now deduce the rest of main theorem~\ref{Theorem.MainTheorem} as a consequence.

\begin{proof}[Proof of main theorem~\ref{Theorem.MainTheorem}] Main theorem~\ref{Theorem.Sigma2Sigma3Extendible} amounts to (a strengthening of) main theorem~\ref{Theorem.MainTheorem} statement (6), and the point now is that the rest of main theorem~\ref{Theorem.MainTheorem} is immediately implied by it, for the simple reason that all the other large cardinal properties mentioned in the main theorem imply $\Sigma_3$-extendibility with the same target.

If $\kappa$ is superstrong, for example, witnessed by superstrongness embedding $j:V\to M$ with target $\theta=j(\kappa)$ and $V_\theta=M_\theta$, then it follows that $V_\kappa\elesub M_{j(\kappa)}=V_\theta$, thereby showing that $\kappa$ is $\Sigma_n$-extendible with target $\theta$ for all $n$. Since every almost huge, huge, superhuge, rank-into-rank, extendible and $1$-extendible cardinal is also superstrong with the same target, the same conclusion applies to these cardinals. Every $0$-extendible cardinal is explicitly $\Sigma_n$-extendible for every $n$, and similarly with the uplifting cardinals and the pseudo-uplifting cardinals. If $\kappa$ is weakly superstrong, then there will be embeddings $j:M\to N$ with critical point $\kappa$ and $V_{j(\kappa)}\of N$, which consequently show that $V_\kappa\elesub V_\theta$ for $\theta=j(\kappa)$, witnessing that $\kappa$ is $0$-extendible and thus $\Sigma_n$-extendible for every $n$. And the same reasoning applies to superstrongly unfoldable cardinals, which are weakly superstrong. We already mentioned earlier that every $\Sigma_n$-correct cardinal is $\Sigma_n$-extendible to any $\theta\in C^{(n)}$, and therefore similarly with every $\Sigma_n$-reflecting cardinal.

So if $\kappa$ has any of the large cardinal properties mentioned in main theorem~\ref{Theorem.MainTheorem}, then it is $\Sigma_3$-extendible with the same target $\theta$, and this is destroyed by the forcing $\Q\in V_\theta$, showing that all the other large cardinal properties are also destroyed for target $\theta$ or higher.
\end{proof}

\section{Improvements, alternative proofs, and questions}\label{Section.Improvements}

Since supercompact cardinals and many other large cardinals can be made Laver indestructible, and these cardinals in particular are necessarily $\Sigma_2$-reflecting and hence also $\Sigma_2$-extendible, it follows (assuming the consistency of those large cardinal notions) that main theorem~\ref{Theorem.Sigma2Sigma3Extendible} cannot be improved from $\Sigma_3$-extendibility to $\Sigma_2$-extendibility. That is, we already know of these situations where $\Sigma_2$-extendible cardinals are Laver indestructible.

Nevertheless, it is conceivable that the main theorems could be improved by weakening the hypotheses that $\kappa$ exhibits the large cardinal property in the ground model $V$. That is, in the main theorems we needed to assume that $\kappa$ was (at least) $\Sigma_2$-extendible in the ground model $V$, in order to know that forcing with $\Q$ would destroy $\Sigma_3$-extendibility. Can we omit this assumption? We're not sure. Perhaps this assumption is simply redundant, for it is conceivable that the $\Sigma_3$-extendibility of $\kappa$ in $V[G]$ might imply the required $\Sigma_2$-extendibility of $\kappa$ in the ground model $V$.

\begin{question}
 If $\kappa$ is $\Sigma_3$-extendible with target $\theta$ in a forcing extension $V[G]$ obtained via strategically $\ltkappa$-closed forcing $G\of\Q\in V_\theta$, then must $\kappa$ be $\Sigma_2$-extendible with a target above the rank of $\Q$ in $V$?
\end{question}

If so, then the main theorems could be improved by dropping the assumption on $\kappa$ in the ground model $V$, making instead the plain assertion that after any nontrivial strategically $\ltkappa$-closed forcing $\Q\in V_\theta$, the cardinal $\kappa$ is no longer $\Sigma_3$-extendible with target $\theta$ or higher, and so similarly neither is it superstrong, extendible, almost huge, uplifting, and so on, with such a target. Thus, we would be able to make the same conclusion of the main theorems, while assuming nothing about $\kappa$ in the ground model $V$.

The next theorem shows that in many instances, for particular forcing notions $\Q$, we are able to omit the hypothesis that $\kappa$ is $\Sigma_2$-extendible in the ground model.

\begin{theorem}\label{Theorem.Q=QxQhomogeneous}
 Suppose that $\Q\in V_\theta$ is almost-homogeneous nontrivial strategically $\ltkappa$-closed forcing and that $\Q\cong\Q\times\Q$. Then forcing with $\Q$ destroys the $\Sigma_3$-extendibility of $\kappa$ with target $\theta$ or higher. In particular, after any such forcing $\Q$, the cardinal $\kappa$ is not superstrong, extendible, almost huge, uplifting, pseudo-uplifting and so on with target $\theta$ or higher.
\end{theorem}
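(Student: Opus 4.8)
The plan is to suppose toward contradiction that $\kappa$ is $\Sigma_3$-extendible with some target $\bar\theta\geq\theta$ in $V[G]$, that is, $V_\kappa=V[G]_\kappa\elesub_3 V[G]_{\bar\theta}$, and to contradict the nontriviality of $\Q$. As in the proof of main theorem~\ref{Theorem.Sigma2Sigma3Extendible} I would first arrange, by increasing $\eta$ if necessary, that $\eta$ is a $\beth$-fixed point and a fixed point of the enumeration of $\beth$-fixed points, with $\cof(\eta)>\kappa$, $\Q\in V_\eta$ and $\eta\leq\bar\theta$, so that $V[G]_\gamma=V_\gamma[G]$ for the relevant $\gamma$ and $V[G]_\kappa=V_\kappa$. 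The new ingredient is almost-homogeneity: the assertion ``$V_\kappa\elesub_3 V_{\bar\theta}[\dot G]$'' carries only the ground-model parameters $V_\kappa$ and $\bar\theta$, so its boolean value is decided by $\one$, and since it holds for our $G$ we get $\one\forces V_\kappa\elesub_3 V_{\bar\theta}[\dot G]$. Hence $\Sigma_3$-extendibility of $\kappa$ with target $\bar\theta$, and a fortiori the weaker $V_\kappa\elesub_2 V_{\bar\theta}[\dot G]$, holds in \emph{every} $\Q$-generic extension of $V$.

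Next I would fix, once and for all, a ground decomposition of $V_\kappa$. Exactly as in the main theorem, the $\Sigma_3$-expressible sentence $\sigma$ asserting ``the universe is a nontrivial set-forcing extension of one of its grounds'' holds in $V[G]_{\bar\theta}$ and therefore reflects through $V_\kappa\elesub_3 V[G]_{\bar\theta}$ to $V_\kappa$ itself. As $V_\kappa=V[G]_\kappa$ is a fixed ground-model object, this yields fixed parameters $s,h,\P_0\in V_\kappa$, with $\P_0$ nontrivial of size $<\delta$ (for $\delta=|\P_0|^+<\kappa$), $s=({}^\ltdelta 2)^{W^{V_\kappa}_s}$ and $V_\kappa=W^{V_\kappa}_s[h]$; call $\pi(s,h,\P_0)$ the corresponding $\Pi_2$ assertion ``the universe is $W_s[h]$.'' Now I would use $\Q\cong\Q\times\Q$ to write $G=G_0\times G_1$ and pass to the intermediate model $V[G_0]$. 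Both $G_0$ and $G_0\times G_1$ are $\Q$-generic over $V$, so by the first paragraph $\kappa$ is $\Sigma_2$-extendible with target $\bar\theta$ in each of $V[G_0]$ and $V[G_0][G_1]=V[G]$. Since $V[G_0]_\kappa=V[G_0][G_1]_\kappa=V_\kappa\satisfies\pi(s,h,\P_0)$ and this is $\Pi_2$ with parameters in $V_\kappa$, the $\Sigma_2$-elementarity transfers $\pi$ upward and, cutting down to $\eta$, gives $V[G_0]_\eta=W^{V[G_0]_\eta}_s[h]$ and $V[G_0][G_1]_\eta=W^{V[G_0][G_1]_\eta}_s[h]$, with both grounds computing $({}^\ltdelta 2)=s$ and the correct $\delta^+$.

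The contradiction then comes from running the comparison of main theorem~\ref{Theorem.Sigma2Sigma3Extendible}, but one forcing step higher, between $V[G_0]$ and $V[G_0][G_1]$. Indeed $V[G_0][G_1]_\eta$ is obtained from $W^{V[G_0]_\eta}_s$ by the nontrivial small forcing $\P_0$ followed by the strategically $\ltdelta$-closed forcing $\Q$, so by lemma~\ref{Lemma.ClosurePoint} it has the $\delta$-approximation and cover properties over $W^{V[G_0]_\eta}_s$; it also has them over $W^{V[G_0][G_1]_\eta}_s$ via the small forcing $h$. These two pseudo-grounds share the binary tree $s$ and the correct $\delta^+$, so theorem~\ref{Theorem.ApproximationCoverImpliesM=N} forces $W^{V[G_0]_\eta}_s=W^{V[G_0][G_1]_\eta}_s$, whence $V[G_0][G_1]_\eta=W^{V[G_0][G_1]_\eta}_s[h]=W^{V[G_0]_\eta}_s[h]=V[G_0]_\eta$. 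But $G_1$ is a nontrivial $\Q$-generic over $V[G_0]$ with $\Q\in V[G_0]_\eta$, so $V[G_0][G_1]_\eta\neq V[G_0]_\eta$, a contradiction. The large-cardinal consequences (superstrong, extendible, almost huge, uplifting, and so on) follow exactly as in the proof of main theorem~\ref{Theorem.MainTheorem}, since each implies $\Sigma_3$-extendibility with the same target.

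The conceptual heart of the argument, and the step I expect to require the most care, is the realization that almost-homogeneity together with $\Q\cong\Q\times\Q$ supplies, \emph{for free} at the intermediate model $V[G_0]$, precisely the hypothesis that had to be assumed about the ground model in main theorem~\ref{Theorem.Sigma2Sigma3Extendible}: there the $\Sigma_2$-extendibility of $\kappa$ in $V$ was what lifted the fixed decomposition $\pi(s,h,\P_0)$ from level $\kappa$ to level $\eta$, whereas here homogeneity makes $\Sigma_2$-extendibility with target $\bar\theta$ hold in $V[G_0]$ itself. The delicate points to verify are that a \emph{single} choice of $s,h,\P_0$ works in both $V[G_0]$ and $V[G_0][G_1]$ (guaranteed because these parameters are extracted in the common object $V_\kappa$ rather than re-chosen in each extension), and that the two resulting pseudo-grounds genuinely meet the hypotheses of theorem~\ref{Theorem.ApproximationCoverImpliesM=N}; the appeal to lemma~\ref{Lemma.ClosurePoint} with the two-step forcing $\P_0*\Qdot$ is what secures the $\delta$-approximation and cover properties on the side where the closed forcing $\Q$ intervenes.
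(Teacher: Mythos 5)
Your proposal is correct and takes essentially the same route as the paper: decompose $G\cong G_0\times G_1$ using $\Q\cong\Q\times\Q$, use almost-homogeneity to transfer the $\Sigma_3$-extendibility of $\kappa$ (and with it the needed $\Sigma_2$-extendibility) to the intermediate model $V[G_0]$, and then obtain the contradiction between $V[G_0]$ and $V[G_0][G_1]$. The only difference is presentational: the paper at this point simply cites main theorem~\ref{Theorem.Sigma2Sigma3Extendible} as a black box applied to the extension $V[G_0]\of V[G_0][G_1]$, whereas you re-run its proof inline (your $s,h,\P_0$ playing the role of the paper's $r_0,G_0,\Q_0$), and you do so correctly.
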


\begin{proof}
Suppose that $V[G]$ is a forcing extension by such a $\Q$, and that $\kappa$ is $\Sigma_3$-extendible to $\theta$ in $V[G]$. Since $\Q\cong\Q\times\Q$, we may view $V[G]=V[G_0][G_1]$ as a two-step forcing extension, using $\Q$ each time, where $G\cong G_0\times G_1$. Since we assumed that $\Q$ was almost homogeneous, it follows that all forcing extensions of $V$ by $\Q$ have the same theory about ground model objects, and in particular, $\kappa$ must be $\Sigma_3$-extendible in $V[G_0]$. Since $\Q$ remains strategically $\ltkappa$-closed in $V[G_0]$, we may apply main theorem~\ref{Theorem.Sigma2Sigma3Extendible} to the extension $V[G_0]\of V[G_0][G_1]$ to see that $\kappa$ cannot be $\Sigma_3$-extendible to $\theta$ in $V[G]$ after all, a contradiction.
\end{proof}

\begin{corollary}\label{Corollary.Add(kappa,1)Etc.}
 After forcing with any of the following forcing notions,
 $$\Add(\kappa,1),\quad \Add(\kappa,\kappa^{++}),\quad \Add(\kappa^+,1),\quad \Coll(\kappa^{++},\kappa^{(+)^{\omega^2+5}})$$
 or with any of many other similar forcing notions, a regular cardinal $\kappa$ is not $\Sigma_3$-extendible in the forcing extension. Consequently, $\kappa$ is also not superstrong, extendible, $1$-extendible, $0$-extendible, almost huge, huge, uplifting, pseudo-uplifting, or $\Sigma_3$-correct, etc. in the extension.
\end{corollary}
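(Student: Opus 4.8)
The plan is to read the corollary off theorem~\ref{Theorem.Q=QxQhomogeneous}: it suffices to check that each displayed poset is nontrivial, almost-homogeneous, strategically $\ltkappa$-closed, and isomorphic to its own square $\Q\cong\Q\times\Q$, for then that theorem immediately forbids $\Sigma_3$-extendibility with target $\theta$ or higher. Nontriviality and almost-homogeneity are clear in every case, the latter via the familiar bit-flipping and value-transposing automorphisms of Cohen and collapse forcing. For closure, $\Add(\kappa,1)$ and $\Add(\kappa,\kappa^{++})$ are $\ltkappa$-closed because $\kappa$ is regular, so a union of fewer than $\kappa$ conditions, each of size less than $\kappa$, again has size less than $\kappa$; and $\Add(\kappa^+,1)$ and $\Coll(\kappa^{++},\kappa^{(+)^{\omega^2+5}})$ are ${\lt}\kappa^+$-closed and ${\lt}\kappa^{++}$-closed respectively, hence a fortiori $\ltkappa$-closed. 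Genuine $\ltkappa$-closure of course yields the strategic $\ltkappa$-closure that the theorem requires.

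The only computational step is the self-square isomorphism, which in each case reduces to a bijection of index sets. For the Cohen posets one has $\Add(\kappa,\mu)\times\Add(\kappa,\mu)\cong\Add(\kappa,\mu\cdot 2)$, and since $|\mu\cdot 2|=|\mu|$ a bijection of the index sets gives $\Add(\kappa,\mu\cdot 2)\cong\Add(\kappa,\mu)$; applying this with $\mu=1$ and with $\mu=\kappa^{++}$ dispatches the first two forcings, and the same argument with $\kappa$ replaced by $\kappa^+$ handles $\Add(\kappa^+,1)$. For the collapse, writing $\lambda=\kappa^{(+)^{\omega^2+5}}$, I would code a pair $(p_0,p_1)$ of conditions as the single partial function $q(\alpha,i)=p_i(\alpha)$ on $\kappa^{++}\times 2$; regularity of $\kappa^{++}$ keeps $|q|<\kappa^{++}$, and transporting along a bijection $\kappa^{++}\times 2\cong\kappa^{++}$ yields $\Coll(\kappa^{++},\lambda)\times\Coll(\kappa^{++},\lambda)\cong\Coll(\kappa^{++},\lambda)$. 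The clause ``many other similar forcing notions'' is then warranted by the observation that these checks use nothing beyond regularity of the closure cardinal and the self-similarity $\mu\cdot 2\cong\mu$ of the index.

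It remains to see that any putative $\Sigma_3$-extendibility target is large enough for the theorem to bite. Suppose toward contradiction that $\kappa$ is $\Sigma_3$-extendible in $V[G]$ with target $\theta$, so $V[G]_\kappa\elesub_3 V[G]_\theta$. Then $\kappa$ is in particular $\Sigma_1$-extendible in $V[G]$, whence $\kappa$ and, by elementarity, $\theta$ are $\beth$-fixed points with $\kappa<\theta$; in particular $\theta$ is a strong limit cardinal above $\kappa$ in $V[G]$, hence a limit cardinal exceeding $\kappa^{++}$. For the three Cohen posets this already places $\theta$ beyond $\rank(\Q)$, since $\kappa$ is a strong limit (so $2^{\ltkappa}=\kappa$) and these forcings have rank at most roughly $\kappa^{++}$. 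For the collapse the forcing gives $\lambda$ size $\kappa^{++}$, so the interval $(\kappa^{++},(\lambda^+)^V)$ contains no cardinals of $V[G]$, and the limit cardinal $\theta$ must therefore exceed $\lambda$ and hence $\rank(\Q)$. In every case $\Q\in V_\theta$, so theorem~\ref{Theorem.Q=QxQhomogeneous} applies and destroys precisely this $\Sigma_3$-extendibility, the desired contradiction; thus $\kappa$ is not $\Sigma_3$-extendible in $V[G]$. The ``consequently'' clause follows as in the proof of main theorem~\ref{Theorem.MainTheorem}, since superstrongness, extendibility, $1$- and $0$-extendibility, almost hugeness, hugeness, upliftingness, pseudo-upliftingness and $\Sigma_3$-correctness each entail $\Sigma_3$-extendibility with the same target. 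I expect the main obstacle to be the self-square isomorphism for the collapse, where one must use regularity of $\kappa^{++}$ to keep the combined domain small, together with the bookkeeping that pins the extendibility target above $\rank(\Q)$.
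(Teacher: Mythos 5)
Your proposal is correct and takes essentially the same approach as the paper, whose entire proof of this corollary is the remark that these posets satisfy the hypotheses of theorem~\ref{Theorem.Q=QxQhomogeneous} together with the observation that any $\Sigma_3$-extendibility target for $\kappa$ must lie above the forcing notion in question---exactly the two points you verify in detail. One cosmetic slip: for $\mu=1$ the justification ``$|\mu\cdot 2|=|\mu|$'' fails, but the intended conclusion $\Add(\kappa,2)\cong\Add(\kappa,1)$ of course still holds via a bijection of the full index sets $\kappa\times 2\cong\kappa$.
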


\begin{proof}
All these forcing notions satisfy the hypothesis of theorem~\ref{Theorem.Q=QxQhomogeneous}. Note that any $\Sigma_3$-extendibility target for $\kappa$ must be above these particular forcing notions.
\end{proof}

We believe that much of the phenomenon of the main theorems is explained by an affirmative answer to the following question, asked by the second author on MathOverflow (and there are many similar questions arising generally with other forcing notions). Here, by adding a Cohen subset to $\kappa$ over a ground model $M$, we mean to force over $M$ with $\Add(\kappa,1)^M$, whose conditions are bounded subsets of $\kappa$ ordered by end-extension.

\begin{question}[\cite{MO120546Hamkins:CanAModelBeACohenExtensionInTwoDifferentWays?}]\label{Question.M[G]=N[H]}
 Does a regular cardinal $\kappa$ necessarily become definable after forcing to add a Cohen subset to it? In particular, if $M$ and $N$ are models of \ZFC\ having a common forcing extension $M[G]=N[H]$, where $G$ is $M$-generic for $\Add(\kappa,1)^M$ and $H$ is $N$-generic for $\Add(\gamma,1)^N$, then must $\kappa=\gamma$?
\end{question}

Theorem~\ref{Theorem.SmallestOrSecond} provides an affirmative answer to the first part of this question, as well as to the second part in the case of inaccessible cardinals. In general, we show that there are at most two regular cardinals of the type mentioned in the question, which nearly answers the question, although we do not know if the case of two cardinals can actually occur; so this part of question~\ref{Question.M[G]=N[H]} remains open. These facts, however, can be used to provide an alternative proof of the main claims of main theorem~\ref{Theorem.MainTheorem}, as we explain after the proof of theorem~\ref{Theorem.SmallestOrSecond}.

Let $\mathcal{C}(\kappa)$ assert that $\kappa$ is a regular cardinal and the universe was obtained by forcing over some ground $W$ to add a Cohen subset to $\kappa$, that is, ``$V=W[G]$ for some ground $W$ and some $W$-generic $G\of\Add(\kappa,1)^W$.''

\begin{theorem}\label{Theorem.SmallestOrSecond} If $\mathcal{C}(\gamma)$ and $\mathcal{C}(\kappa)$ hold, where $\gamma<\kappa$, then $2^\gamma=\kappa$. Consequently,
\begin{enumerate}
 \item There are at most two regular cardinals satisfying property $\mathcal{C}$.
 \item There is at most one inaccessible cardinal satisfying property $\mathcal{C}$.
 \item If $\mathcal{C}(\kappa)$ holds, then $\kappa$ is
 $\Delta_3$-definable either as
    $$\text{``the smallest regular cardinal with property $\mathcal{C}$,''}$$
 or as
    $$\text{``the largest regular cardinal with property $\mathcal{C}$.''}$$
 \item If $\mathcal{C}(\kappa)$ holds and $\kappa$ is inaccessible, then $\kappa$ is $\Pi_2$-definable as
  $$\text{``the inaccessible cardinal with property $\mathcal{C}$.''}$$
\end{enumerate}
Furthermore, these definitions work also in $V_\theta$, whenever $\theta$ is a $\beth$-fixed point of cofinality larger than $2^{2^\kappa}$ or for which $V_\theta$ satisfies $\Pi_2$-collection.
\end{theorem}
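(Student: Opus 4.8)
The plan is to prove the key equation $2^\gamma=\kappa$ first, and then derive the four consequences and the $V_\theta$-relativization from it. Assume $\mathcal{C}(\gamma)$ and $\mathcal{C}(\kappa)$ hold with $\gamma<\kappa$, so $V=W[G]$ with $G$ generic for $\Add(\gamma,1)^W$ and $V=U[H]$ with $H$ generic for $\Add(\kappa,1)^U$. The forcing $\Add(\gamma,1)$ has size $2^\gamma$ (its conditions are the bounded subsets of $\gamma$), so by lemma~\ref{Lemma.ClosurePoint} the extension $W\of V$ has the $\delta$-approximation and cover properties for any regular $\delta>2^\gamma$. Meanwhile $\Add(\kappa,1)$ is $\ltkappa$-closed, and its size is $2^{\ltkappa}$; the point is that if $\kappa$ were strictly larger than $2^\gamma$, I would try to run an argument in the spirit of the proof of main theorem~\ref{Theorem.Sigma2Sigma3Extendible}, showing that the two grounds $W$ and $U$ would have to coincide on a suitable $P(\delta)$ and hence be equal by theorem~\ref{Theorem.ApproximationCoverImpliesM=N}, contradicting the nontriviality of one of the two forcings. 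More precisely, I expect to show that $\Add(\gamma,1)$-forcing provides $V$ with a ground having the $\delta$-approximation and cover properties where $\delta$ can be taken to be $(2^\gamma)^+$, whereas $\Add(\kappa,1)$-forcing makes $V$ over $U$ look like a $\ltkappa$-closed extension; if $2^\gamma<\kappa$ then both forcings fall below $\kappa$ in the relevant sense, and comparing the approximation-and-cover structure forces a collapse that contradicts nontriviality. So the only way for both $\mathcal{C}(\gamma)$ and $\mathcal{C}(\kappa)$ to hold is $2^\gamma=\kappa$.

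Granting $2^\gamma=\kappa$, the consequences follow quickly. For (1): if there were three regular cardinals $\gamma_0<\gamma_1<\gamma_2$ all satisfying $\mathcal{C}$, then applying the main equation to the pairs $(\gamma_0,\gamma_1)$ and $(\gamma_1,\gamma_2)$ gives $2^{\gamma_0}=\gamma_1$ and $2^{\gamma_1}=\gamma_2$; but applying it to $(\gamma_0,\gamma_2)$ gives $2^{\gamma_0}=\gamma_2$, so $\gamma_1=\gamma_2$, a contradiction. For (2): if $\gamma<\kappa$ both satisfy $\mathcal{C}$ then $\kappa=2^\gamma$ is a successor-size power and hence not inaccessible (and in the two-cardinal case the larger one fails inaccessibility), so at most one inaccessible cardinal can satisfy $\mathcal{C}$. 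For (3): the property $\mathcal{C}(\kappa)$ is expressible by quantifying over grounds, ``$\exists r\,\exists G$ ($V=W_r[G]$ and $\kappa$ regular and $G$ is $W_r$-generic for $\Add(\kappa,1)^{W_r}$),'' which by the complexity analysis of section~\ref{Section.Background} is $\Sigma_3$ in the parameter $\kappa$ (an existential over the $\Pi_2$ ground assertion); since by (1) there are at most two such cardinals, $\kappa$ is pinned down as either the least or the greatest regular cardinal with property $\mathcal{C}$, and each such description is $\Delta_3$ because ``$x$ has $\mathcal{C}$ and no smaller (resp. larger) regular cardinal does'' is simultaneously $\Sigma_3$ and $\Pi_3$. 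For (4): when $\kappa$ is inaccessible, by (2) it is the unique inaccessible with property $\mathcal{C}$; here the ground $W$ is obtained by forcing of size $<\kappa$, so the relevant assertion ``$V$ is $\Add(\kappa,1)$ over some ground, with $\kappa$ inaccessible'' can be verified by inspecting a single rank-initial segment and becomes $\Pi_2$ rather than $\Sigma_3$, because inaccessibility together with the closure of $\Add(\kappa,1)$ lets one locate the ground uniformly without the extra existential quantifier over targets.

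For the final ``Furthermore'' clause, the plan is to observe that all the definability computations above went through by inspecting rank-initial segments $V_{\gamma+2}$ and applying theorem~\ref{Theorem.ApproximationCoverImpliesM=N} and theorem~\ref{Theorem.Grounds} locally, exactly as in the discussion preceding main theorem~\ref{Theorem.Sigma2Sigma3Extendible} of how the ground machinery relativizes to $V_\theta$. So it suffices that $\theta$ be large enough to see the relevant grounds and witnesses: taking $\theta$ to be a $\beth$-fixed point of cofinality larger than $2^{2^\kappa}$ guarantees that $V_\theta$ is closed under the power-set operations needed to compute $\Add(\kappa,1)$, $\Add(\gamma,1)$ and their $P(\delta)$-data and to run theorem~\ref{Theorem.ApproximationCoverImpliesM=N} for the pertinent $\delta$, while the alternative hypothesis that $V_\theta$ satisfies $\Pi_2$-collection directly licenses the $\Sigma_3$/$\Pi_2$ reflection arguments used in (3) and (4) inside $V_\theta$.

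I expect the main obstacle to be the proof of the core equation $2^\gamma=\kappa$ itself, and specifically ruling out $2^\gamma<\kappa$: this requires carefully tracking which $\delta$ gives the approximation and cover properties for each of the two grounds and arranging a genuine clash via theorem~\ref{Theorem.ApproximationCoverImpliesM=N}, rather than the comparatively routine bookkeeping of the four numbered consequences. The delicate point is that $\Add(\kappa,1)$ is highly closed but potentially large, so one must separate the roles of the closure degree and the size of each forcing when applying lemma~\ref{Lemma.ClosurePoint}, and confirm that the two grounds $W$ and $U$ really are forced to coincide on the relevant power set unless $\kappa$ sits exactly at $2^\gamma$.
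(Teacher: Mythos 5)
Your derivation of the four numbered consequences from the equation $2^\gamma=\kappa$ matches the paper, but the equation itself---the entire content of the theorem's first clause---is only a declared intention in your proposal, and the strategy you sketch for it would fail. You propose to treat $W$ (the $\Add(\gamma,1)$-ground) and $U$ (the $\Add(\kappa,1)$-ground) as two grounds of $V$ and force them to coincide via theorem~\ref{Theorem.ApproximationCoverImpliesM=N}. The $W$-side is fine (lemma~\ref{Lemma.ClosurePoint} with trivial second factor gives $W\of V$ the $\delta$-approximation and cover properties for $\delta=\gamma^+$, say), but the uniqueness theorem needs \emph{both} models to have these properties for the \emph{same} $\delta$, and $U\of V$ fails the $\delta$-approximation property for every $\delta\leq\kappa$: the generic set $A=\bigcup H\of\kappa$ satisfies $A\intersect a\in U$ for every $a\in U$ of size less than $\kappa$, since $\Add(\kappa,1)^U$ is $\ltkappa$-closed (hence $\ltkappa$-distributive), yet $A\notin U$. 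Lemma~\ref{Lemma.ClosurePoint} only yields approximation and cover for a two-step iteration whose \emph{first} factor is nontrivial and small, which $\Add(\kappa,1)$ over $U$ is not; separating ``closure degree'' from ``size,'' as you suggest, does not repair this. This is exactly why the paper's lemma~\ref{Lemma.M[G]=N[H]Implies2^QgeqKappa} takes a detour absent from your plan: it forces with $\Q=\Add(\kappa,1)$ a \emph{second} time over $V$, so that over $V[G_1]$ the ground $N$ sits below a small-then-closed iteration and acquires the $\gamma^+$-approximation and cover properties; it then uses $\Q\iso\Q\times\Q$ to view $G*G_1$ as a single $\Q$-generic and the almost homogeneity of $\Q$ to transfer the resulting assertion (with parameters shown to lie in $M$) back down to $M[G]=V$; only then does theorem~\ref{Theorem.ApproximationCoverImpliesM=N} apply, and the contradiction comes from absorbing $H$ into $N[H_0]$ against the nontriviality of the closed factor. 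Note also that the dichotomy is two-sided: besides the hard inequality $\kappa\leq(2^{|\P|})^N$ one needs $2^\gamma\leq 2^\ltkappa=\kappa$, which comes from $\Add(\kappa,1)$ forcing $2^\ltkappa=\kappa$ (lemma~\ref{Lemma.2^gamma=kappa}); your framing only contemplates ruling out $2^\gamma<\kappa$.

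The complexity claims in (3) and (4) also have a real gap. You bound $\mathcal{C}(\kappa)$ by $\Sigma_3$, but from a $\Sigma_3$ bound alone, ``$\kappa$ has $\mathcal{C}$ and no smaller (resp.\ larger) regular cardinal does'' is $\Sigma_3\wedge\Pi_3$, not visibly $\Delta_3$; your parenthetical claim that it is ``simultaneously $\Sigma_3$ and $\Pi_3$'' implicitly needs $\mathcal{C}$ itself to be $\Delta_3$, which you never establish. The paper instead proves the sharper fact that $\mathcal{C}(\kappa)$ is $\Pi_2$: one asserts of \emph{every} suitable $V_{\lambda+2}$ that \emph{some} $r,G\in V_\lambda$ witness $V_\lambda=W_r^{V_\lambda}[G]$, allowing different witnesses at different $\lambda$, and then recovers a single global witness by pigeonhole, since $r\of{}^{\ltkappa^+}2$ and $G\of H_\kappa$ range over a bounded set, so some fixed pair recurs for unboundedly many $\lambda$. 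Your explanation of (4)---that inaccessibility lets one ``inspect a single rank-initial segment,'' yielding $\Pi_2$---is wrong as stated: a single $V_\lambda$ cannot certify the global statement $V=W_r[G]$, and a single-segment formulation would in any case have existential ($\Sigma_2$-shaped) form; in the paper, $\Pi_2$-ness of $\mathcal{C}(\kappa)$ has nothing to do with inaccessibility, which enters only to make the instance unique via (2). The same pigeonhole (or, alternatively, $\Pi_2$-collection in $V_\theta$) is what drives the ``furthermore'' clause and explains the specific bound $\cof(\theta)>2^{2^\kappa}$ as the number of possible pairs $(r,G)$---not, as you suggest, closure of $V_\theta$ under power-set computations.
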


In other words, if you force over $V$ to add a Cohen subset $G$ to $\kappa$, then $\kappa$ becomes definable in the specified manner in the forcing extension $V[G]$, and this definition works there in the corresponding $V[G]_\theta$. The proof will proceed via two lemmas.

\begin{sublemma}\label{Lemma.M[G]=N[H]Implies2^QgeqKappa}
 Suppose that $\kappa$ is a regular cardinal, that $M$ and $N$ are transitive class models of \ZFC, and that $V = M[G] = N[H]$ for some $M$-generic $G\of \Q\in M$ and $N$-generic $H\of\P\in N$, where $\Q$ is nontrivial, almost homogeneous, strategically $\ltkappa$-closed and $\Q\cong\Q\times\Q$ in $M$ and $\P$ is nontrivial. Then $\kappa\leq (2^{|\P|})^N$.
\end{sublemma}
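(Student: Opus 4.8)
The plan is to argue by contradiction. Suppose $\kappa>(2^{|\P|})^N$ and write $\mu=|\P|^N$, so that $\mu<\kappa$ and $(2^\mu)^N<\kappa$. Two features of $\Q$ will carry the argument. First, since $\Q$ is strategically $\ltkappa$-closed it is $\ltkappa$-distributive, so it adds no new set of hereditary size less than $\kappa$; thus $H_\kappa^{M[G]}=H_\kappa^M$, and in particular $P(\gamma)^V=P(\gamma)^M$ for every $\gamma<\kappa$. Consequently, after replacing $\P$ by an isomorphic copy on an ordinal $\le\mu$, the filter $H$ becomes a subset of $\mu$ of size less than $\kappa$, so $H\in H_\kappa^V=H_\kappa^M$ and hence $H\in M$, and likewise $\P\in M$. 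Second, since $\Q$ is almost homogeneous and $\Q\in M$, any subset of $M$ that is definable in $V=M[G]$ from parameters in $M$ (and ordinals) already belongs to $M$: if $B\of M$ and $z\in B\iff V\satisfies\varphi(z,p)$ with $p\in M$, then for $z\in M$ weak homogeneity gives $z\in B\iff\one_\Q\forces\varphi(\check z,\check p)$, so $B=\{z\in M:\one_\Q\forces\varphi(\check z,\check p)\}$ is defined over $M$ and hence lies in $M$.

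Next I would exhibit $N$ as a definable ground with a \emph{small} parameter. Because $\P$ is nontrivial of size $\mu$, lemma~\ref{Lemma.ClosurePoint} (with a trivial second factor) shows that the extension $N\of V=N[H]$ enjoys the $\delta$-approximation and $\delta$-cover properties for $\delta=\mu^+$. By theorem~\ref{Theorem.Grounds} it follows that $N=W_r^V$, where $r=({}^{\ltdelta}2)^N$. The decisive point is that $r$ has hereditary size at most $(2^\mu)^N<\kappa$, so $r\in H_\kappa^V=H_\kappa^M$ and therefore $r\in M$. This is exactly where the hypothesis $\kappa>(2^{|\P|})^N$ is used, and it is what makes the homogeneity principle of the first paragraph applicable to the definition of $N$.

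I would then transfer $N$ into $M$ and conclude. The aim is $N\of M$, i.e.\ that every set of ordinals of $N=W_r^V$ lies in $M$; granting this, $H\in M$ gives $V=N[H]\of M$, so $V=M$, contradicting the nontriviality of $\Q$ and proving $\kappa\le(2^{|\P|})^N$. To obtain $N\of M$ I would proceed by induction on rank, aiming to show $P(\theta)^N\of M$ for every $\theta$. Here the homogeneity principle applies: the collection $P(\theta)^N=P(\theta)\intersect W_r^V$ is uniformly definable in $V$ from the parameter $r\in M$ by theorem~\ref{Theorem.Grounds}(3), so once it is known to be a subset of $M$ it is itself an element of $M$. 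To see it is a subset of $M$, I would exploit that $N\of V$ has the $\delta$-approximation property: for $B\of\theta$ with $B\in N$, every approximation $B\intersect a$ with $a\in N$ of size ${<}\delta$ has size ${<}\kappa$ and so lies in $H_\kappa^V=H_\kappa^M\of M$. Using $\Q\cong\Q\times\Q$, I would run this analysis inside the two mutually generic factors, factoring $V=M[G_0][G_1]$ with $G_0,G_1$ mutually $M$-generic, locating the relevant sets in both $M[G_0]$ and $M[G_1]$, and then intersecting via $M[G_0]\intersect M[G_1]=M$.

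The step I expect to be the main obstacle is precisely this relocation of $B$ into $M$ from its small approximations. The almost homogeneity of $\Q$ reflects $M$-definable subsets of $M$ back into $M$, but a purely strategically $\ltkappa$-closed extension such as $M\of V$ does \emph{not} satisfy the $\delta$-approximation property, so one cannot simply invoke approximation on the $M$-side to conclude $B\in M$. Circumventing this is exactly the role of the self-product hypothesis $\Q\cong\Q\times\Q$: passing to the mutually generic factors $M[G_0]$ and $M[G_1]$—over each of which $V$ is a homogeneous extension—and using the identity $M[G_0]\intersect M[G_1]=M$ lets one pin down $B$ in $M$ without an approximation property for the closed forcing. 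Making this level-by-level transfer go through uniformly in $\theta$, so that $W_r^V$ is seen entirely through subsets of $M$ that are definable from $r$, is the technical heart of the proof.
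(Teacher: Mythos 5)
Your preliminary steps are sound and agree with the paper's: by the $\ltkappa$-distributivity of $\Q$, the small objects $\P$, $H$, and $r=({}^\ltdelta 2)^N$ all lie in $M$ (this is exactly where the reductio hypothesis $(2^{|\P|})^N<\kappa$ is used); $N\of V$ has the $\delta$-approximation and cover properties by lemma~\ref{Lemma.ClosurePoint} with trivial second factor, so $N=W_r^V$; and almost homogeneity does put $M$-definable subsets of $M$ back into $M$. But the argument then stalls exactly where you say it does, and the gap is genuine: you have no mechanism for placing an arbitrary $B\in P(\theta)^N$ into either single factor $M[G_0]$ or $M[G_1]$, so the intersection identity $M[G_0]\intersect M[G_1]=M$ never gets purchase. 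Knowing that all $\delta$-small approximations $B\intersect a$ lie in $M$ cannot help, since, as you yourself note, $M\of V$ has no approximation property (take $\Q=\Add(\kappa,1)$: the generic is approximated by its initial segments yet is not in $M$), and the desired membership $B\in M[G_i]$ is just the same problem one level down---$B\in N\of V=M[G_0][G_1]$ gives no foothold in a single factor. So the plan to prove $N\of M$ by rank induction does not go through; indeed the paper never proves $N\of M$ at all, and its contradiction is reached along a different route.

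The missing idea is a different deployment of the two hypotheses on $\Q$. The paper forces with $\Q$ \emph{again} over $V$, obtaining $V[G_1]=N[H][G_1]$, so that $N$ is a ground of $V[G_1]$ by the two-step pattern ``small $\P$ followed by strategically $\ltkappa$-closed forcing''---precisely the pattern of lemma~\ref{Lemma.ClosurePoint}---and this situation is a first-order assertion whose parameters $r$, $\P$, $\kappa$ all lie in $M$. The hypothesis $\Q\cong\Q\times\Q$ is then used not to split $G$ into mutually generic halves, but to view $G*G_1$ as a \emph{single} $M$-generic filter for $\Q$, so that almost homogeneity transfers that assertion from $M[G*G_1]=V[G_1]$ back to $M[G]=V$: thus $V=W_r^V[H_0][G_0]$ for some $W_r^V$-generic $H_0\of\P$ and some $G_0\of\Q_0$ nontrivial and strategically $\ltkappa$-closed over $W_r^V[H_0]$. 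Theorem~\ref{Theorem.ApproximationCoverImpliesM=N} (both $N$ and $W_r^V$ have the $\delta$-approximation and cover properties in $V$, with the same $({}^\ltdelta 2)$ and the correct $\delta^+$) identifies $W_r^V=N$, and the contradiction is immediate: $H$ is small, so it could not have been added by $G_0$, hence $H\in N[H_0]$, whence $V=N[H]\of N[H_0]$ and $G_0\in N[H_0]$, contradicting the nontriviality of $\Q_0$. Your homogeneity principle for sets is the right instinct, but what closes the argument is homogeneity applied to a \emph{sentence} asserting the two-step ground structure, made available only by the second forcing with $\Q$.
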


\begin{proof}
Let $\gamma=|\P|^N$ and suppose toward contradiction that $(2^\gamma)^N<\kappa$. Assume $\P$ has domain $\gamma$. Let $\delta=\gamma^+$, which has the same meaning in $N$, $M$ and $V$. Force over $V$ with $\Q$ again, to add $V$-generic $G_1\of\Q$, and form the extension $V[G_1]=N[H][G_1]$, which is a forcing extension of $N$ by $\P$ followed by $\Q$. Since $\gamma<\kappa$ and $\Q$ is strategically $\ltkappa$-closed in $N[H]$, it follows by lemma~\ref{Lemma.ClosurePoint}, choosing a suitable $\P$-name for $\Q$ in $N$, that $N\of N[H][G_1]$ has the $\delta$-approximation and cover properties. Note also that $\delta^+$ is the same in all the models mentioned here. Thus, $N=W_r^{V[G_1]}$, where $r=({}^{\ltdelta}2)^N$, and so $V[G_1]=N[H][G_1]$ satisfies the assertion,
\begin{quote}
 ``The universe is a forcing extension of the ground $W_r$ defined by parameter $r$, using forcing $\P$ followed by some further nontrivial strategically $\ltkappa$-closed forcing.''
\end{quote}
The parameters of this assertion are $r$, $\P$ and $\kappa$, which we claim are all in $M$: of course $\kappa$ is in $M$, and $\P\in M[G]$ is small, so it cannot have been added by $G$, and so $\P\in M$; for $r$, observe that because $r\in N\of M[G]$ and $r\of M$ by the closure of $\Q$, it follows from $|r|^N=(2^\gamma)^N<\kappa$ and the closure of $\Q$ again that $r\in M$. Since $V[G_1]=M[G][G_1]$ and $\Q\cong\Q\times\Q$ in $M$, the two-step generic filter $G*G_1$ is isomorphic to a single $M$-generic filter over $\Q$. Since $\Q$ is almost homogeneous, it follows that every forcing extension of $M$ via $\Q$ must satisfy all the same assertions about parameters in $M$. In particular, the displayed assertion above must also be true in $M[G]$ itself, that is to say, in $V$. So $V=M[G]=W_r^V[H_0][G_0]$ for some $W_r^V$-generic $H_0\of\P\in W_r^V$ and some $W_r^V[H_0]$-generic $G_0\of\Q_0\in W_r^V[H_0]$, where $\Q_0$ is nontrivial and strategically $\ltkappa$-closed there. Furthermore, $W_r^V\of M[G]$ has the $\delta$-approximation and cover properties, the correct $\delta^+$, and $r=({}^\ltdelta 2)^{W_r^V}$. Since $N\of N[H]=M[G]$ also has the $\delta$-approximation and cover properties, the correct $\delta^+$, and $r=({}^\ltdelta 2)^N$, it follows by theorem~\ref{Theorem.ApproximationCoverImpliesM=N} that $W_r^V=N$. Note that $H\in N[H]=M[G]=W_r^V[H_0][G_0]$, but it could not have been added by $G_0$, and so $H\in W_r^V[H_0]=N[H_0]$. Thus, $N[H]\of N[H_0]$ and consequently $M[G]\of N[H_0]$, which means in particular that $G_0\in N[H_0]$, contradicting that $\Q_0$ was nontrivial.
\end{proof}

\begin{sublemma}\label{Lemma.2^gamma=kappa}
 Suppose that $\gamma\leq\kappa$ are regular cardinals, that $M$ and $N$ are transitive class models of \ZFC, and that $V = M[G] = N[H]$ for some $M$-generic $G\of \Add(\kappa,1)^M$ and some $N$-generic $H\of\Add(\gamma,1)^N$. Then either $\gamma=\kappa$ or $2^\gamma=\kappa$.
\end{sublemma}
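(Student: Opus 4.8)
The plan is to assume $\gamma<\kappa$ (otherwise $\gamma=\kappa$ and we are done) and to prove $(2^\gamma)^V=\kappa$ by establishing the two inequalities separately, leaning on sublemma~\ref{Lemma.M[G]=N[H]Implies2^QgeqKappa} for the lower bound. Two standing facts will drive the argument. First, since $\gamma$ and $\kappa$ are regular in $V$ they are regular in the inner models $N$ and $M$ (a singularizing map would survive into $V$), so $\Add(\kappa,1)^M$ is $\ltkappa$-closed in $M$ and $\Add(\gamma,1)^N$ is $\ltgamma$-closed in $N$; in particular $\Add(\kappa,1)^M$ adds no new subsets of $\gamma$, whence $P(\gamma)^V=P(\gamma)^M$, while all cardinals $\le\kappa$ are preserved from $M$ to $V$. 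Second, the classical collapsing behaviour of a single Cohen set: $\Add(\kappa,1)^M$ collapses $(2^{<\kappa})^M$ to $\kappa$, and $\Add(\gamma,1)^N$ collapses $(2^{<\gamma})^N$ to $\gamma$.

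For the upper bound $(2^\gamma)^V\le\kappa$ I would argue as follows. Since $P(\gamma)^V=P(\gamma)^M$ we have $(2^\gamma)^V=|(2^\gamma)^M|^V$, and because $\gamma<\kappa$ gives $(2^\gamma)^M\le(2^{<\kappa})^M$, the collapse of $(2^{<\kappa})^M$ to $\kappa$ (together with preservation of cardinals $\le\kappa$) yields $|(2^\gamma)^M|^V\le\kappa$.

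For the lower bound $\kappa\le(2^\gamma)^V$ I would apply sublemma~\ref{Lemma.M[G]=N[H]Implies2^QgeqKappa} with $\Q=\Add(\kappa,1)^M$ and $\P=\Add(\gamma,1)^N$. The forcing $\Q$ is nontrivial, almost homogeneous, strategically $\ltkappa$-closed and satisfies $\Q\cong\Q\times\Q$ (exactly the Cohen-forcing facts invoked for theorem~\ref{Theorem.Q=QxQhomogeneous} in corollary~\ref{Corollary.Add(kappa,1)Etc.}), and $\P$ is nontrivial, so the sublemma delivers $\kappa\le\nu$, where $\nu=(2^{|\P|})^N=(2^{\lambda_0})^N$ and $\lambda_0=|\P|^N=(2^{<\gamma})^N$. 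The decisive step is to re-evaluate this $N$-computed bound inside $V$. Since $H$ collapses $\lambda_0$ to $\gamma$ we have $|\lambda_0|^V=\gamma$; fixing in $N$ a bijection between $\nu$ and $P(\lambda_0)^N$ and using $P(\lambda_0)^N\subseteq P(\lambda_0)^V$, we obtain
$$|\nu|^V=|P(\lambda_0)^N|^V\le|P(\lambda_0)^V|^V=(2^{\lambda_0})^V=(2^\gamma)^V.$$
As $\kappa\le\nu$ and $\kappa$ is a cardinal of $V$, this gives $\kappa\le|\nu|^V\le(2^\gamma)^V$. Combined with the upper bound, $(2^\gamma)^V=\kappa$.

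The main obstacle is that sublemma~\ref{Lemma.M[G]=N[H]Implies2^QgeqKappa} on its own yields only $\kappa\le(2^{2^{<\gamma}})^N$, which is far weaker than $\kappa\le 2^\gamma$; bridging this doubly-exponential gap is precisely the role of the collapsing fact $|\lambda_0|^V=\gamma$, which forces the large $N$-cardinal $\nu=(2^{\lambda_0})^N$ to have size at most $(2^\gamma)^V$ once it is recomputed inside $V$ through the inclusion $P(\lambda_0)^N\subseteq P(\lambda_0)^V$. Note also that the upper bound genuinely needs the companion collapsing fact on the $M$-side: without it, $(2^\gamma)^M$ could in principle exceed $\kappa$ and survive to $V$. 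A secondary concern is the book-keeping across the three models $M$, $N$ and $V$, keeping careful track of which model computes each power set and cardinality, but these points are routine once the closure and collapsing facts are in hand.
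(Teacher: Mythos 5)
Your proof is correct and takes essentially the same route as the paper's: the upper bound comes from the collapse $2^{<\kappa}=\kappa$ in $V=M[G]$, and the lower bound from sublemma~\ref{Lemma.M[G]=N[H]Implies2^QgeqKappa} applied with $\Q=\Add(\kappa,1)^M$ and $\P=\Add(\gamma,1)^N$. Your explicit bridging step---recomputing $\nu=(2^{|\P|^N})^N$ inside $V$ via $P(\lambda_0)^N\of P(\lambda_0)^V$ and $|\lambda_0|^V=\gamma$, then using that $\kappa$ is a $V$-cardinal---is simply a more careful rendering of the paper's terse chain $\kappa\leq(2^{|\Add(\gamma,1)|})^N\leq(2^\gamma)^N=2^\gamma\leq 2^{<\kappa}=\kappa$, which tacitly performs the same cross-model cardinality bookkeeping (the exponent $(2^{<\gamma})^N$ need not equal $\gamma$ in $N$, so the recomputation in $V$ that you spell out is indeed the step that makes the chain go through).
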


\begin{proof}
Assume $\gamma<\kappa$. Since $\Add(\gamma,1)$ forces $2^\ltgamma=\gamma$ in $N[H]$, it follows that $2^\ltgamma=\gamma$ in $M$, and similarly $2^\ltkappa=\kappa$ in $M[G]$. Both $\Add(\gamma,1)^N$ and $\Add(\kappa,1)^M$ are nontrivial, and the latter is $\ltkappa$-closed in $M$. Thus, using lemma~\ref{Lemma.M[G]=N[H]Implies2^QgeqKappa} for the first step, we observe that $$\kappa\leq (2^{|\Add(\gamma,1)|})^N\leq(2^\gamma)^N=2^\gamma\leq 2^\ltkappa=\kappa,$$
and so $2^\gamma=\kappa$, as desired.
\end{proof}

\begin{proof}[Proof of theorem~\ref{Theorem.SmallestOrSecond}]
It follows directly from lemma~\ref{Lemma.2^gamma=kappa} that if $\mathcal{C}(\gamma)$ and $\mathcal{C}(\kappa)$ and $\gamma<\kappa$, then $2^\gamma=\kappa$. So there cannot be three regular cardinals satisfying $\mathcal{C}$, since any larger instance is determined in this way by the smallest instance, and consequently any regular cardinal satisfying $\mathcal{C}$ will either be the first or the second regular cardinal satisfying $\mathcal{C}$. Thus, any such cardinal is definable by this description in the forcing extension. Similarly, since $2^\gamma$ is never inaccessible, there cannot be two inaccessible cardinals with property $\mathcal{C}$.

Let us now analyze the complexity of these definitions. First, one may easily see that the assertion $\mathcal{C}(\kappa)$ has complexity at most $\Sigma_3$ in the extension, since it is equivalent to the assertion that ``there is some parameter $r$ and some $G$, such that in any $V_{\lambda+2}$ which sees that $\lambda$ is a $\beth$-fixed point of cofinality larger than $\kappa$, then $G\of\Add(\kappa,1)^{W_r^{V_\lambda}}$ is $W_r^{V_\lambda}$-generic and $V_\lambda=W_r^{V_\lambda}[G]$.'' But actually, we can improve this, and we claim that $\mathcal{C}(\kappa)$ has complexity $\Pi_2$. Specifically, we claim that $\mathcal{C}(\kappa)$ in $V$ is equivalent to the statement ``for every structure of the form $V_{\lambda+2}$, where $\lambda$ is a $\beth$-fixed point of cofinality above $\kappa$, there is $r,G\in V_\lambda$ such that $G\of\Add(\kappa,1)^{W_r^{V_\lambda}}$ is $W_r$-generic and $V_\lambda=W_r^{V_\lambda}[G]$.'' This statement has complexity $\Pi_2$, since we are essentially saying ``$\forall Z$, if $Z=V_{\lambda+2}$ for some $\lambda$ and\ldots,'' where the rest of the assertion is all taking place inside $Z$ with all quantifiers bounded by $Z$, and ``$Z=V_{\lambda+2}$ for some $\lambda$'' has complexity $\Pi_1$, since one can say that $Z$ is transitive and satisfies a certain theory and contains all subsets of its members. The point is that even though the assertion allows different $\lambda$ to make use of different $r$ and $G$, it follows from the fact that $r$ and $G$ are bounded in size---we have $r\of {}^{\ltkappa^+}2$ and $G\of H_\kappa$---that there must be some particular $r$ and $G$ that are repeated unboundedly often with increasingly large $\lambda$. Any such unboundedly occurring pair $r$ and $G$ will ensure $V=W_r[G]$, as desired.

Since $\mathcal{C}(\kappa)$ is $\Pi_2$ expressible, it follows easily that the assertions ``$\kappa$ is least with property $\mathcal{C}$'' and ``$\kappa$ is second with property $\mathcal{C}$'' have complexity at worst $\Delta_3$, and the assertion ``$\kappa$ is inaccessible and $\mathcal{C}(\kappa)$'' has complexity $\Pi_2$. Furthermore, these definitions work inside any $V_\theta$, provided that $\theta$ is a $\beth$-fixed point limit of cofinality at least $2^{2^\kappa}$, which is the number of possible $r$'s and $G$'s that might arise. And indeed, one may use $\Pi_2$-collection in place of this cofinality argument to ensure that some $r$ and $G$ are used with unboundedly many $\lambda$ below $\theta$.
\end{proof}

One may use theorem~\ref{Theorem.SmallestOrSecond} to provide an alternative proof of the main non-indestructibility claims of the main theorems. The idea is that after forcing with $\Add(\kappa,1)$, a regular cardinal $\kappa$ cannot be $\Sigma_3$-extendible in the forcing extension, witnessed by $V_\kappa\elesub_3 V[G]_\theta$, since $V[G]_\theta$ would satisfy $\Pi_2$-collection and so $\kappa$ is $\Sigma_3$-definable in $V[G]_\theta$, which is impossible, as the existence of such a cardinal would have to reflect below $\kappa$ in light of $V_\kappa\elesub_3 V[G]_\theta$. Consequently, after forcing to add a Cohen subset to $\kappa$, the cardinal $\kappa$ cannot be $\Sigma_3$-reflecting, superstrong, extendible, uplifting and so on with all the other large cardinal notions we have mentioned in the main theorem. And the same argument works with many other forcing notions.

Let us conclude the paper by pointing out that one may not strengthen the superdestructibility claim of the main theorem from strategically $\ltkappa$-closed forcing to $(\kappa,\infty)$-distributive forcing (and thanks to the referee for this suggestion). Suppose, for example, that $\kappa$ is superstrong in $V$ with target $\theta$, and for simplicity, let us suppose also that $\theta$ itself is inaccessible. Let $\P=\Pi_{\delta<\theta}\Add(\delta^+,1)$ be the Easton-support product up to $\theta$ of the partial orders $\Add(\delta^+,1)$ to add a Cohen subset to $\delta^+$, whenever $\delta<\theta$ is inaccessible. If $G\of\P$ is $V$-generic, then the standard arguments show that $\kappa$ remains superstrong in $V[G]$. Namely, fix any superstrongness extender embedding $j:V\to M$, and then lift the embedding through the forcing $G_\kappa\of\P\restrict\kappa$ up to stage $\kappa$, resulting in $j:V[G_\kappa]\to M[j(G_\kappa)]$, using $j(G_\kappa)=G$; next, use the fact that the rest of the forcing $G_{\kappa,\theta}$ at coordinates in the interval $[\kappa,\theta)$ is $\leqkappa$-distributive over $V[G_\kappa]$, and so being an extender embedding, $j$ lifts uniquely to $j:V[G_\kappa][G_{\kappa,\theta}]\to M[j(G_\kappa)][j(G_{\kappa,\theta})]$, where $j(G_{\kappa,\theta})$ is the filter generated by $j\image G_{\kappa,\theta}$. The lifted embedding $j:V[G]\to M[j(G)]$ has $V[G]_{j(\kappa)}\of M[j(G)]$, because we used the generic filter $G$ below $\theta$, and so $\kappa$ remains superstrong in $V[G]$. Now, let $g\of\kappa^+$ be $V[G]$-generic for $\Add(\kappa^+,1)^V$, that is, adding a second generic filter for the forcing at coordinate $\kappa$. Since adding two subsets of $\kappa^+$ is isomorphic to adding only one, we see that $\kappa$ is superstrong in $V[G][g]$, since this can be viewed as $V[G^*]$, where $G^*$ is the same as $G$, except that $G^*(\kappa)\iso G(\kappa)*g$ at coordinate $\kappa$ incorporates the two generic filters together at that stage into one. So the situation here is that a superstrong cardinal $\kappa$ in $V[G]$ remains superstrong after the forcing $\Add(\kappa^+,1)^V$ over $V[G]$, even though this forcing is $(\kappa,\infty)$-distributive in $V[G]$, which is the residue in $V[G]$ of the fact that it is $\leqkappa$-closed in $V$. So we cannot expect to prove that superstrongness is always destroyed by such forcing.

\bibliographystyle{alpha}
\bibliography{MathBiblio,HamkinsBiblio}

\begin{thebibliography}{Ham94b}

\bibitem[AG98]{ApterGitik98}
Arthur~W. Apter and Moti Gitik.
\newblock The least measurable can be strongly compact and indestructible.
\newblock {\em J. Symbolic Logic}, 63(4):1404--1412, 1998.

\bibitem[AH99]{ApterHamkins99:UniversalIndestructibility}
Arthur~W. Apter and Joel~David Hamkins.
\newblock Universal indestructibility.
\newblock {\em Kobe J. Math.}, 16(2):119--130, 1999.

\bibitem[Apt06a]{Apter2006:IndestructibilityAndStrongCompactness}
Arthur~W. Apter.
\newblock Indestructibility and strong compactness.
\newblock {\em Proceedings of the Logic Colloquium 2003}, LNL 24:27--37, 2006.

\bibitem[Apt06b]{Apter2006:LeastStronglyCompactCanBeLeastStrongAndIndestructible}
Arthur~W. Apter.
\newblock The least strongly compact can be the least strong and
  indestructible.
\newblock {\em Annals of Pure and Applied Logic}, 144:33--42, 2006.

\bibitem[BT11]{Brooke-Taylor2011:IndestructibilityOfVopenkaPrinciple}
Andrew~D. Brooke-Taylor.
\newblock Indestructibility of {Vop\v{e}nka's} principle.
\newblock {\em Archive for Mathematical Logic}, 50(5-6):515--529, 2011.

\bibitem[FHR]{FuchsHamkinsReitz:Set-theoreticGeology}
Gunter Fuchs, Joel~David Hamkins, and Jonas Reitz.
\newblock Set-theoretic geology.
\newblock under review.

\bibitem[GS89]{GitikShelah89}
Moti Gitik and Saharon Shelah.
\newblock On certain indestructibility of strong cardinals and a question of
  {Hajnal}.
\newblock {\em Archive for Mathematical Logic}, 28(1):35--42, 1989.

\bibitem[Ham94a]{Hamkins94:FragileMeasurability}
Joel Hamkins.
\newblock Fragile measurability.
\newblock {\em J. Symbolic Logic}, 59(1):262--282, 1994.

\bibitem[Ham94b]{Hamkins94:Dissertation}
Joel~David Hamkins.
\newblock {\em Lifting and extending measures; fragile measurability}.
\newblock PhD thesis, University of California, Berkeley, Department of
  Mathematics, May 1994.

\bibitem[Ham98]{Hamkins98:SmallForcing}
Joel~David Hamkins.
\newblock Small forcing makes any cardinal superdestructible.
\newblock {\em J. Symbolic Logic}, 63(1):51--58, 1998.

\bibitem[Ham99]{Hamkins99:GapForcingGen}
Joel~David Hamkins.
\newblock Gap forcing: generalizing the {L}\'evy-{S}olovay theorem.
\newblock {\em Bull. Symbolic Logic}, 5(2):264--272, June 1999.

\bibitem[Ham00]{Hamkins2000:LotteryPreparation}
Joel~David Hamkins.
\newblock The lottery preparation.
\newblock {\em Ann. Pure Appl. Logic}, 101(2-3):103--146, 2000.

\bibitem[Ham01]{Hamkins2001:GapForcing}
Joel~David Hamkins.
\newblock Gap forcing.
\newblock {\em Israel J. Math.}, 125:237--252, 2001.

\bibitem[Ham03]{Hamkins2003:ExtensionsWithApproximationAndCoverProperties}
Joel~David Hamkins.
\newblock Extensions with the approximation and cover properties have no new
  large cardinals.
\newblock {\em Fund. Math.}, 180(3):257--277, 2003.

\bibitem[Ham05]{Hamkins2005:TheGroundAxiom}
Joel~David Hamkins.
\newblock The {Ground Axiom}.
\newblock {\em Mathematisches Forschungsinstitut Oberwolfach Report},
  55:3160--3162, 2005.

\bibitem[Ham13]{MO120546Hamkins:CanAModelBeACohenExtensionInTwoDifferentWays?}
Joel~David Hamkins\phantom{x}(mathoverflow.net/users/1946).
\newblock {C}an a model of set theory be realized as a {C}ohen-subset forcing
  extension in two different ways, with different grounds and different
  cardinals?
\newblock MathOverflow, 2013.
\newblock \url{http://mathoverflow.net/questions/120546} (version: 2013-02-01).

\bibitem[HJ]{HamkinsJohnstone:StronglyUpliftingCardinalsAndBoldfaceResurrection}
Joel~David Hamkins and Thomas Johnstone.
\newblock Strongly uplifting cardinals and the boldface resurrection axioms.
\newblock under review, http://arxiv.org/abs/1403.2788.

\bibitem[HJ10]{HamkinsJohnstone2010:IndestructibleStrongUnfoldability}
Joel~David Hamkins and Thomas~A. Johnstone.
\newblock Indestructible strong unfoldability.
\newblock {\em Notre Dame J. Form. Log.}, 51(3):291--321, 2010.

\bibitem[HJ14]{HamkinsJohnstone2014:ResurrectionAxiomsAndUpliftingCardinals}
Joel~David Hamkins and Thomas Johnstone.
\newblock Resurrection axioms and uplifting cardinals.
\newblock {\em Archive for Mathematical Logic}, 53(3-4/5-6), 2014.

\bibitem[HS98]{HamkinsShelah98:Dual}
Joel~David Hamkins and Saharon Shelah.
\newblock Superdestructibility: a dual to {L}aver's indestructibility.
\newblock {\em J. Symbolic Logic}, 63(2):549--554, 1998.
\newblock [HmSh:618].

\bibitem[Joh07]{Johnstone2007:Dissertation}
Thomas~A. Johnstone.
\newblock {\em Strongly unfoldable cardinals made indestructible}.
\newblock PhD thesis, The Graduate Center of the City University of New York,
  June 2007.

\bibitem[Joh08]{Johnstone2008:StronglyUnfoldableCardinalsMadeIndestructible}
Thomas~A. Johnstone.
\newblock Strongly unfoldable cardinals made indestructible.
\newblock {\em Journal of Symbolic Logic}, 73(4):1215--1248, 2008.

\bibitem[Lav78]{Laver78}
Richard Laver.
\newblock Making the supercompactness of $\kappa$ indestructible under
  $\kappa$-directed closed forcing.
\newblock {\em Israel Journal of Mathematics}, 29:385--388, 1978.

\bibitem[Lav07]{Laver2007:CertainVeryLargeCardinalsNotCreated}
Richard Laver.
\newblock Certain very large cardinals are not created in small forcing
  extensions.
\newblock {\em Annals of Pure and Applied Logic}, 149(1--3):1--6, 2007.

\bibitem[Mit06]{Mitchell2003:ANoteOnHamkinsApproximationLemma}
William Mitchell.
\newblock {A Note on Hamkins' approximation lemma}.
\newblock {\em Annals of Pure and Applied Logic}, 144:126--129, December 2006.
\newblock Conference in honor of James E. Baumgartner's sixtieth birthday.

\bibitem[Rei06]{Reitz2006:Dissertation}
Jonas Reitz.
\newblock {\em The {Ground Axiom}}.
\newblock PhD thesis, The Graduate Center of the City University of New York,
  September 2006.

\bibitem[Rei07]{Reitz2007:TheGroundAxiom}
Jonas Reitz.
\newblock The {Ground Axiom}.
\newblock {\em Journal of Symbolic Logic}, 72(4):1299--1317, 2007.

\bibitem[Sar09]{Sargsyan2009:OnIndestructibilityAspectsOfIdentityCrises}
Grigor Sargsyan.
\newblock On indestructibility aspects of identity crises.
\newblock {\em Archive for Mathematical Logic}, 48:493--513, 2009.

\end{thebibliography}
\end{document}